\titleformat{\section}{\bfseries}{\thesection .}{0.5em}{}
\titleformat{\subsection}{\itshape}{\thesubsection .}{0.5em}{}
\newtheoremstyle{kai}
{3pt} {3pt} {} {} {\bfseries} {.} {.5em} {}
\def\EquationsBySection{\def\theequation
{\thesection.\arabic{equation}}%
\@addtoreset{equation}{section}}
\newcommand\old[1]{}
\newcommand{\pend}{\hfill \thicklines \framebox(6.6,6.6)[l]{}}
\renewenvironment{proof}{\noindent {\it  Proof.} \rm}{\pend}
\newtheorem{theorem}{Theorem}[section]
\newtheorem{lemma}{Lemma}[section]
\newtheorem{corollary}{Corollary}[section]
\newtheorem{proposition}{Proposition}[section]
\newtheorem{remark}{Remark}[section]
\newtheorem{definition}{Definition}[section]
\begin{document}
\pagestyle{plain}
\title
{\bf L\'{e}vy-driven Fluid Queue with Server Breakdowns and
Vacations}
\author{Jinbiao Wu $^a$\thanks{E-mail address: wujinbiao@ymail.com}, {Zaiming Liu$^a$}, {Yi Peng$^b$}
\\
\vspace{0.2cm}\small\it $^a$School of Mathematics and Statistics, \\
 \small\it Central South University,
Changsha 410083, Hunan, P.R.  China\\
 \small\it  $^b$Junior Education Department,\\ \small\it Changsha Normal University, Changsha
410100,  Hunan, P.R.  China}

\date{}
\maketitle

\begin{abstract}
In this paper, we consider a L\'{e}vy-driven fluid queueing system
where the server may subject to breakdowns and repairs. In addition,
the server will leave for a vacation each time when he finds an
empty system. We cast the queueing process as a L\'{e}vy process
modified to have random jumps at two classes of stopping times. By
using the Kella-Whitt martingale method, we obtain the limiting
distribution of the virtual waiting time process. Moreover, we
investigate the busy period, the correlation structure and the
stochastic decomposition properties. These results may be
generalized to L\'{e}vy processes with multi-class jump inputs or
L\'{e}vy-driven queues with multiple input classes.
\end{abstract}

\noindent{\it Keywords:} \small  L\'{e}vy processes; Fluid queues;
Server breakdowns and vacations; Kella-Whitt martingale; Stochastic
decomposition

{\textbf{Mathematics Subject Classification:}}  60K25; 60K30

\section{Introduction}\label{sec1}
The class of L\'{e}vy processes consisting of all stochastic
processes with stationary independent increments is one of the most
important family of stochastic processes arising in many areas of
applied probability. It covers well-studied processes such as
Brownian motion and compound Poisson processes. L\'{e}vy processes
are used as models in the study of queueing systems, insurance
risks, storage systems, mathematical finance and so on. For a
comprehensive and outstanding analysis of L\'{e}vy processes and
their applications, readers can refer to the books by Bertoin
\cite{Bertoin1998}, Sato \cite{Sato1999}, Applebaum
\cite{Applebaum2004}, and Kyprianou \cite{Kyprianou2006}.

Over recent years it has been a rapid growth in the literature on
queues with server breakdowns and vacations due to their widely
applications in computer communication networks and manufacturing
systems. Li et al. \cite{Li1997} considered an $M/G/1$ queue with
Bernoulli vacations and server breakdowns using a supplementary
variable method. Gray et al. \cite{Gray2000} analyzed a
multiple-vacation $M/M/1$ queueing model, where the service station
is subject to breakdown while in operation. Ke \cite{Ke2003} studied
the control policy in $M^{[X]}/M/1$ queue with server breakdowns and
multiple vacations. Jain and Jain \cite{Jain2010} dealt with a
single server working vacation queueing model with multiple types of
server breakdowns.  Wu and Yin \cite{Wu2011} gave a detailed
analysis on an $M/G/1$ retrial queue with non-exhaustive random
vacations and unreliable server. Wang et al. \cite{Wang2013} studied
a discrete-time $Geo/G/1$ queue in which the server operates
multiple vacations and may break down while working. Yang and Wu
\cite{Yang2015} investigated the N-policy $M/M/1$ queueing system
with working vacation and server breakdowns. However, most of the
papers are focused on queues where customers arrive at the system
according to independent Poisson processes or geometrical arrival
processes.  Queueing systems with L\'{e}vy input (or L\'{e}vy-driven
queues) are still not well investigated in literature. Recently,
queuing systems with L\'{e}vy input have been attracting increasing
attention in the applied probability and stochastic operations
research communities. L\'{e}vy-driven queues covers the classical
$M/G/1$ queue and the reflected Brownian motion as special cases.
Kella and Boxma \cite{Kella1991} first considered L\'{e}vy processes
with secondary jump input which were applied to analyze queues with
server vacations. Recently, Lieshout and Mandjes \cite{Lieshout2008}
analyzed tail asymptotics of a two-node tandem queue with spectrally
positive L\'{e}vy input. Boxma et al. \cite{Boxma2010} analyzed a
generic class of L\'{e}vy-driven queuing systems with server
vacation. D\c{e}bicki and Mandjes \cite{Debicki2012} provided a
survey on L\'{e}vy-driven queues. D\c{e}bicki et al.
\cite{Debicki2013} focused on transient analysis of L\'{e}vy-driven
tandem queues. Palmowski et al. \cite{Palmowski2014} considered a
controlled fluid queuing model with L\'{e}vy input. Boxma and Kella
\cite{Boxma2014} generalized known workload decomposition results
for L\'{e}vy queues with secondary jump inputs and queues with
server vacations or service interruptions. However, to the best of
our knowledge,  no work on L\'{e}vy queues with server breakdowns
and vacations is found in the queueing literature.

In this paper, we consider a single-server L\'{e}vy-driven fluid
queue with multiple vacations (exhaustive service) and a server
subject to breakdowns and repairs which is motivated by the
performance analysis of resources in communication networks. A
breakdown at the server is represented by the arrival of a failure.
The principal purpose of the present paper is to apply the
martingale results which were derived in Kella and Whitt
\cite{Kella1992} to investigate the stochastic dynamics of the
system and realize an extensive analysis of the system from the
transient virtual waiting time process to the steady-state
distribution of the waiting time process. In addition, we give a
detailed analysis of the system busy period and the queue's
correlation structure. Furthermore, we establish decomposition
results for the model.

The rest of the paper is organized as follows. In Section 2, we
introduce a few basic facts concerning L\'{e}vy processes and
martingales. In Section 3, we describe the general model and
formulate the model as a L\'{e}vy process modified to have random
jumps at two classes of stopping times. In Section 4, we
characterize the steady-state distribution of the virtual waiting
time process and the mean length of the busy period. In Section 5,
we address the transient distribution of the waiting time process as
well as the queue's correlation structure. In Section 6, we present
two stochastic decomposition results.

 \section{Preliminaries on L\'{e}vy processes}\label{sec2}

In this section, we mainly give several notations, definitions and
propositions about L\'{e}vy processes. For more details, see
\cite{Bertoin1998, Sato1999, Applebaum2004, Asmussen2003}.

Denote $\mathbb{R}=(-\infty, \infty)$ and $\mathbb{R}_+=[0,
\infty)$. Let $(\Omega, \mathcal{F}, \mathbb{P}, \{
\mathcal{F}_{t\geq0}\})$ be a complete probability space endowed
with a $\sigma$-field filtration $\mathcal{F}_{t\geq0}$, i.e. an
increasing family of sub-fields, which fulfils the usual conditions.
That is, each $\mathcal{F}_t$ is $\mathbb{P}$-complete and
$\mathcal{F}_t=\cap_{s>t}\mathcal{F}_s$ for every $t$. Throughout,
adapted, stopping times and martingales will be defined with respect
to this filtration.
\begin{definition}
Let $X=(X_t, t\geq0)$ be a stochastic process defined on a
probability space $(\Omega, \mathcal{F}, \mathbb{P})$, if
\begin{enumerate}[(i)]
\item $X_0=0$ (a.s.);
\item $X$ has independent and stationary increments;
\item $X$ is stochastically continuous, i.e. for all $a>0$ and for
all $s\geq0$,
$$\lim_{t\to s}\mathbb{P}(|X_t-X_s|>a)=0,$$
then $X$ is a L\'{e}vy process.
\end{enumerate}
\end{definition}

Two central and key results in the foundations of L\'{e}vy processes
are given by the following propositions (known as the
L\'{e}vy-Khintchine formula and L\'{e}vy-It\^{o} decomposition
respectively).
\begin{proposition}
Let $X$ be a L\'{e}vy process with L\'{e}vy measure $\nu$
($\int_{\mathbb{R}-\{0\}} (|x|^2\wedge1)\nu(dx)<\infty$). Then
$$\mathbb{E}e^{i\theta X_t}=e^{t\psi(\theta)}, \ \ \ t\geq0,$$
where
$$\psi(\theta)=i\mu\theta-\frac{1}{2}\theta^2\sigma^2+\int_{\mathbb{R}-\{0\}}(e^{i\theta x}-1
-i\theta x\mathbf{1}_{|x|<1})\nu(dx),$$ $\mu\in \mathbb{R}$ and
$\sigma\geq0$. Moreover, given $\nu$, $\mu$, $\sigma^2$, the
corresponding L\'{e}vy process is unique in distribution.
Furthermore, the jump process of $X$ is a Poisson point process with
characteristic measure $\nu$, that is
 $$\nu(\Lambda)=\mathbb{E}\left[\sum_{0<s\leq1}\mathbf{1}_\Lambda(\Delta X_s)\right].$$
 \end{proposition}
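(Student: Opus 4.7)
The plan is to proceed in two phases: first derive the exponential form of the characteristic function together with the L\'evy--Khintchine exponent $\psi$, and then construct the L\'evy--It\^o decomposition which realises $X$ pathwise as drift plus Brownian motion plus a jump part driven by the Poisson point process of jumps. Uniqueness in distribution will fall out for free from the uniqueness of characteristic functions.

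For the first phase, I will use independent and stationary increments to show that $\phi_t(\theta):=\mathbb{E}e^{i\theta X_t}$ satisfies the multiplicative functional equation $\phi_{s+t}(\theta)=\phi_s(\theta)\phi_t(\theta)$, and stochastic continuity forces $t\mapsto\phi_t(\theta)$ to be continuous with $\phi_0(\theta)=1$. A standard measurable-Cauchy-equation argument, together with the fact that $\phi_t(\theta)$ never vanishes (since $X_t$ is infinitely divisible), yields $\phi_t(\theta)=e^{t\psi(\theta)}$ for a unique continuous $\psi$. Because $X_1=\sum_{k=1}^{n}(X_{k/n}-X_{(k-1)/n})$ is a sum of $n$ i.i.d.\ increments for every $n$, the law of $X_1$ is infinitely divisible, so the classical L\'evy--Khintchine representation for infinitely divisible laws delivers the stated form of $\psi$ with some triple $(\mu,\sigma^2,\nu)$; I would verify the integrability $\int(|x|^2\wedge 1)\,\nu(dx)<\infty$ at this step.

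For the second phase, I will introduce the counting measure of jumps $N([0,t]\times\Lambda):=\#\{0<s\leq t:\Delta X_s\in\Lambda\}$ for Borel sets $\Lambda$ bounded away from $0$. Independence and stationarity of increments of $X$ transfer to $N$ and, via a R\'enyi-type characterisation, identify $N$ as a Poisson random measure with intensity $dt\otimes\nu$; this immediately gives the formula $\nu(\Lambda)=\mathbb{E}[\sum_{0<s\leq 1}\mathbf{1}_\Lambda(\Delta X_s)]$. From $N$ I will build the large-jump compound Poisson process $\sum_{s\leq t,\,|\Delta X_s|\geq 1}\Delta X_s$ and the compensated small-jump integral $\int_{[0,t]\times\{0<|x|<1\}} x\,(N(ds,dx)-ds\,\nu(dx))$ as the $L^2$-limit of the truncated compensated sums on $\{|x|>\varepsilon\}$. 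Subtracting both from $X$ leaves a residual with continuous sample paths that is independent of $N$, and L\'evy's characterisation identifies it as $\mu t+\sigma B_t$. Matching characteristic functions then pins down the triple and yields uniqueness.

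The principal obstacle is the $L^2$-construction of the compensated small-jump integral and the independence argument: one must use $\int_{0<|x|<\varepsilon} x^2\,\nu(dx)<\infty$ to show the truncated compensated sums form a Cauchy sequence of martingales as $\varepsilon\downarrow 0$, establish independence between the resulting pure-jump part and the continuous residual (typically by approximating with disjoint-support Poisson integrals and passing to the limit), and finally apply L\'evy's characterisation of Brownian motion. Once these Poisson random measure tools are in place, the functional-equation step, the appeal to infinite divisibility, and the uniqueness statement are essentially bookkeeping.
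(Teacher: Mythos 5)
The paper states this proposition as a classical result (the L\'evy--Khintchine formula together with the Poisson point process structure of the jumps) and offers no proof of its own, deferring to the cited monographs of Bertoin, Sato, Applebaum and Asmussen. Your outline is precisely the standard argument found in those references---the multiplicative functional equation plus infinite divisibility of $X_1$ for the exponential form of $\mathbb{E}e^{i\theta X_t}$, and the Poisson-random-measure/L\'evy--It\^o construction (using the c\`adl\`ag version of $X$, which the paper also assumes) to identify $\nu$ as the intensity of the jump point process---so it is correct and takes essentially the same route as the paper's sources.
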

The function $\psi(\cdot)$ is called the L\'{e}vy exponent of the
L\'{e}vy process.
\begin{proposition}
Let $X$ be a L\'{e}vy process. Then $X$ has a decomposition
\begin{align*}
X_t=B_t+\mu t+\int_{\{|x|<1\}}x(N_t(\cdot,dx)-t\nu(dx))
+\int_{\{|x|\geq1\}}xN_t(\cdot,dx)
\end{align*}
where $B$ is a Brownian motion and for $0\notin \bar{\Lambda}$,
$N_t^\Lambda=\int_\Lambda N_t(\cdot,dx)$ is a Poisson process
independent of $B$.
 \end{proposition}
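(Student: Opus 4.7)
The plan is to build the decomposition by first isolating the jumps of $X$ according to their magnitude, using the Poisson point process description of the jumps already supplied by Proposition 1, and then identifying the continuous remainder as a Brownian motion with drift.

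First, I would split the jumps at the threshold $|x|=1$. Since $\int_{\mathbb{R}\setminus\{0\}}(|x|^2\wedge 1)\nu(dx)<\infty$, we have $\nu(\{|x|\geq 1\})<\infty$, so the large-jump integral
$$J_t^{(1)}:=\int_{\{|x|\geq 1\}}xN_t(\cdot,dx)$$
is a well-defined compound Poisson process with rate $\nu(\{|x|\geq 1\})$ and jump law $\nu|_{\{|x|\geq 1\}}/\nu(\{|x|\geq 1\})$. By the Poisson point process structure stated in Proposition~1, for disjoint Borel sets $\Lambda,\Lambda'\subset\mathbb{R}\setminus\{0\}$ bounded away from the origin, the processes $N^\Lambda$ and $N^{\Lambda'}$ are independent Poisson processes, so this step is essentially a consequence of the Lévy–Khintchine representation.

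For the small jumps the naive sum $\sum_{0<s\leq t,\;|\Delta X_s|<1}\Delta X_s$ need not converge absolutely, so I would compensate it in $L^2$. For $0<\varepsilon<1$ set
$$Y_t^{\varepsilon}:=\int_{\{\varepsilon\leq|x|<1\}}x\bigl(N_t(\cdot,dx)-t\,\nu(dx)\bigr).$$
Each $Y^{\varepsilon}$ is a square-integrable martingale with $\mathbb{E}[(Y_t^{\varepsilon})^2]=t\int_{\{\varepsilon\leq|x|<1\}}x^2\nu(dx)$, which is uniformly bounded by the Lévy-measure condition. A Cauchy-in-$L^2$ argument together with Doob's maximal inequality then yields a limiting càdlàg martingale $J_t^{(2)}=\int_{\{|x|<1\}}x(N_t(\cdot,dx)-t\nu(dx))$, which is a Lévy process and, being built from $N$ on $\{|x|<1\}$, is independent of $J^{(1)}$.

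Next, define the continuous remainder $C_t:=X_t-J_t^{(1)}-J_t^{(2)}$. By construction $C$ has stationary independent increments and no jumps, hence is almost surely continuous. A standard consequence of Kolmogorov's theorem (or, equivalently, of applying the Lévy–Khintchine formula of Proposition~1 to $C$ and noting that the jump measure of $C$ must vanish) is that any continuous Lévy process has the form $C_t=B_t+\mu t$ for some Brownian motion $B$ and constant $\mu\in\mathbb{R}$. Independence of $B$ from $J^{(1)}$ and $J^{(2)}$ would be established by computing the joint characteristic function of $(C_t,J_t^{(1)},J_t^{(2)})$ and recognising that the resulting Lévy exponent factorises into the Gaussian part and the two jump-integral parts from Proposition~1, whose uniqueness forces the corresponding processes to be independent.

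The main obstacle is the rigorous $L^2$ construction of the compensated small-jump integral $J^{(2)}$ and the identification of the continuous part $C$ as $B_t+\mu t$: both require careful truncation arguments together with the uniqueness statement in the Lévy–Khintchine formula, and the subsequent verification that the three components $B$, $J^{(1)}$ and $J^{(2)}$ are mutually independent is most cleanly handled at the level of characteristic functions rather than path by path.
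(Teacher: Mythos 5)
The paper does not prove this proposition: it is stated in Section~2 as one of two classical preliminary facts (the L\'evy--It\^o decomposition), with the reader referred to the standard references \cite{Bertoin1998, Sato1999, Applebaum2004, Asmussen2003}. So there is no in-paper argument to compare against; what you have written is an outline of the standard textbook proof, and as an outline it is essentially the right route: peel off the finitely many large jumps as a compound Poisson process, construct the compensated small-jump integral as an $L^2$-limit of martingales $Y^{\varepsilon}$ over $\{\varepsilon\le|x|<1\}$ using $\int_{\{|x|<1\}}x^2\,\nu(dx)<\infty$ together with Doob's inequality, and identify the continuous remainder as $B_t+\mu t$ by the classical theorem that a continuous process with stationary independent increments is Gaussian.

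One step, however, would fail as you have stated it. You propose to get mutual independence of $B$, $J^{(1)}$ and $J^{(2)}$ by observing that the L\'evy exponent of $X$ "factorises" and invoking uniqueness of the L\'evy--Khintchine triplet. Uniqueness of the triplet determines only the one-dimensional law of $X_t$; the fact that $\mathbb{E}e^{i\theta X_t}$ is a product of a Gaussian factor and jump factors does not by itself imply that the \emph{joint} characteristic function $\mathbb{E}\exp\{i(\theta_1 C_t+\theta_2 J_t^{(1)}+\theta_3 J_t^{(2)})\}$ factorises, which is what independence requires. (Sums of dependent random variables can have characteristic functions that happen to factor.) The genuine content here --- and the hardest part of the L\'evy--It\^o theorem --- is proving that the continuous remainder $C$ is independent of the Poisson random measure $N$; this is usually done by It\^o's original argument (increments of $C$ over a time interval are independent of the jump counts over disjoint sets in that interval, established via a direct computation of the joint characteristic function using the exponential martingale), or via the stronger statement that a c\`adl\`ag L\'evy process's martingale parts with disjoint jump supports are orthogonal and hence, being L\'evy, independent. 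You flag independence as an "obstacle," which is honest, but the mechanism you name for resolving it is not the one that works; the $L^2$ construction of $J^{(2)}$, by contrast, is exactly as routine as you describe.
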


The following are some known fundamental results about L\'{e}vy
processes:
\begin{enumerate}[(i)]
\item Any L\'{e}vy process can be represented as an independent sum of a Brownian motion $B$ and a
 `compound Poisson'-like process $X^0$. If $X$ has no negative jumps and the paths of $X^0$
are of bounded variation, then $X^0$ can be a subordinator which can
be represented as a nonnegative compound Poisson process or as the
limit of a sequence of nonnegative compound Poisson processes
\cite{Asmussen2010}.
\item If $\nu((-\infty,0))=0$, then the Laplace-Stieltjes
transform exists and is given by $\mathbb{E}\exp(-\theta
X_t)=\exp(t\varphi(\theta))$ where
\begin{align*}
\varphi(\theta)=\log \mathbb{E}\exp\{-\theta X_1\}
=-\mu\theta+\frac{1}{2}\theta^2\sigma^2+\int_{\mathbb{R}_+}(e^{-\theta
x}-1 +\theta x\mathbf{1}_{|x|<1})\nu(dx),
\end{align*}
here the function $\varphi(\cdot)$ is called the Laplace-Stieltjes
exponent of the L\'{e}vy process. It is easy get that $\varphi(0)=0$
and $\varphi(\cdot)$ is convex; If $X$ is not a subordinator, then
$\varphi(\theta)\to\infty$ as $\theta\to\infty$; Whenever $EX_1<0$,
$\varphi(\cdot)$ is strictly increasing. Throughout the paper,
exponent will mean the Laplace-Stieltjes exponent $\varphi(\cdot)$
due to the convenience compared to the L\'{e}vy exponent
$\psi(\cdot)$.
\item If $X$ has bounded jumps, then $\mathbb{E}|X_1|^n<\infty$ for every $n\geq0$.
\end{enumerate}

It is known that there always exists a version with sample paths  of
$X$ that are c\`{a}dl\`{a}g, which is therefore strong Markov.
Throughout this paper, every L\'{e}vy process mentioned is assumed
to be such.

In this paper, we focus only on the spectrally positive L\'{e}vy
process, i.e. $\nu(-\infty,0)=0$. For any spectrally positive
L\'{e}vy process $X$ with $\beta>0$ and $EX_1<0$. Denote by
$T^\xi=\inf\{t|X_t=-\xi\}$ for any non-negative random variable
$\xi$ which is independent of $X$. Then it follows Theorem 3.12 in
\cite{Kyprianou2006} that
$$E\exp\{-\beta T^\xi\}=\exp\{-\varphi^{-1}(\beta)\xi\}, \ \ \
ET^\xi=\frac{E\xi}{-EX_1}.$$

We now give the generalized Pollaczek-Khinchine formula associated
with a reflected L\'{e}vy process with no negative jumps which is
also a celebrated formula in queueing theory. Given some random
variable $0\leq R_0\in \mathcal{F}_0$, let
$$I_t=\left(-\inf_{0\leq s\leq t}X_s-R_0\right)^+, \ \ \
R_t=R_0+X_t+I_t, \ \ \ t\geq0.$$ Then $R$ is a reflected L\'{e}vy
process with $I$ being its local time at zero. Since $X$ has no
negative jumps, $I$ is continuous with $I_0=0$ and is the minimal
right-continuous nondecreasing process such that $R_t\geq0$ for all
$t$. The process $\{I_t, \ t\geq0\}$ must only increase when
$R_t=0$, so that
$$\int_0^\infty \mathbf{1}_{\{R_t>0\}}d I_t=0.$$

The following result is the famous (generalized) Pollaczek-Khinchine
formula in queueing theory. We can refer to the papers
\cite{Kella1991} and \cite{Debicki2012} for its proof.
\begin{proposition}
If $X$ is a spectrally positive L\'{e}vy process such that $EX_1<0$,
then
$$\lim_{t\to\infty}\mathbb{E}e^{-\theta R_t}=\frac{\theta \varphi'(0)}{\varphi(\theta)}, \ \ \ \theta>0.$$
\end{proposition}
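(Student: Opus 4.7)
The plan is to apply the Kella--Whitt martingale of \cite{Kella1992}, tailored to the semimartingale $R_t = R_0 + X_t + I_t$. Since $X$ is a L\'evy process with Laplace--Stieltjes exponent $\varphi$ and $I$ is continuous, adapted and of bounded variation with $I_0=0$, the process
$$M_t = \varphi(\theta)\int_0^t e^{-\theta R_s}\,ds + e^{-\theta R_0} - e^{-\theta R_t} - \theta\int_0^t e^{-\theta R_s}\,dI_s$$
is a local martingale vanishing at $0$ for each fixed $\theta>0$. The key simplification comes from the constraint $\int_0^\infty \mathbf{1}_{\{R_s>0\}}\,dI_s = 0$ recorded above, which forces $\int_0^t e^{-\theta R_s}\,dI_s = I_t$, so $M_t$ reduces to
$$M_t = \varphi(\theta)\int_0^t e^{-\theta R_s}\,ds + e^{-\theta R_0} - e^{-\theta R_t} - \theta I_t.$$

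Once $M$ is promoted to a true martingale of mean zero, taking expectations, dividing by $t$, and sending $t\to\infty$ produces
$$\varphi(\theta)\cdot\frac{1}{t}\int_0^t \mathbb{E}e^{-\theta R_s}\,ds + \frac{\mathbb{E}e^{-\theta R_0}-\mathbb{E}e^{-\theta R_t}}{t} = \theta\cdot\frac{\mathbb{E}I_t}{t}.$$
Under the hypothesis $\mathbb{E}X_1<0$, classical fluctuation theory for spectrally positive L\'evy processes yields $R_t\Rightarrow R_\infty$ for a proper limit, so by bounded convergence $L(\theta):=\lim_{t\to\infty}\mathbb{E}e^{-\theta R_t}$ exists for every $\theta>0$ and the Ces\`aro average on the left tends to the same $L(\theta)$; the middle boundary term vanishes because its numerator lies in $[-1,1]$. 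For the right-hand side, taking expectations in $R_t-R_0 = X_t+I_t$ gives $\mathbb{E}I_t = \mathbb{E}R_t-\mathbb{E}R_0-t\,\mathbb{E}X_1$, hence $\mathbb{E}I_t/t\to -\mathbb{E}X_1 = \varphi'(0)$ once $\mathbb{E}R_t$ is shown bounded uniformly in $t$. Combining the three limits gives $\varphi(\theta)L(\theta) = \theta\,\varphi'(0)$, which rearranges to the claimed identity.

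The main technical hurdle is legitimising these expectation identities. Promoting $M$ from local to true martingale with $\mathbb{E}M_t=0$ requires uniform integrability of $M_t$; since $e^{-\theta R_s}\in[0,1]$ and $\int_0^t e^{-\theta R_s}\,ds\le t$, the only concern is $\theta I_t$, and a standard localisation argument together with $\mathbb{E}I_t<\infty$ (itself following from the moment hypothesis $\int_{\{x\ge 1\}} x\,\nu(dx)<\infty$ implicit in $\mathbb{E}X_1$ being finite) settles it. The same moment hypothesis yields $\mathbb{E}R_\infty<\infty$ and, via truncation, uniform boundedness of $\mathbb{E}R_t$ in $t$, closing the first-moment step. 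Once these two integrability points are dispatched the remainder of the argument is purely algebraic, which is the viewpoint adopted in \cite{Kella1991} and \cite{Debicki2012}.
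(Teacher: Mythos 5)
Your argument is the standard Kella--Whitt route, which is precisely the proof that the paper itself defers to via the citations to Kella--Boxma and D\c{e}bicki--Mandjes: reduce $\int_0^t e^{-\theta R_s}\,dI_s$ to $I_t$ using the support property of $dI$, take expectations in the (true) martingale, divide by $t$, and identify $\lim_t \mathbb{E}I_t/t=-\mathbb{E}X_1=\varphi'(0)$. That architecture is correct, and the division by $\varphi(\theta)$ at the end is legitimate since $\varphi$ is convex with $\varphi(0)=0$ and $\varphi'(0)>0$, so $\varphi(\theta)>0$ for $\theta>0$.

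One of your technical justifications, however, is false as stated: $\mathbb{E}R_\infty<\infty$ does \emph{not} follow from $\mathbb{E}|X_1|<\infty$. Since $R_\infty\overset{d}{=}\sup_{t\ge0}X_t$, finiteness of its mean is a Kiefer--Wolfowitz-type condition equivalent to $\int_{[1,\infty)}x^2\,\nu(dx)<\infty$, i.e.\ a \emph{second} moment of the jump measure; the $M/G/1$ workload, whose stationary mean is $\lambda\mathbb{E}B^2/(2(1-\rho))$, already shows this. Consequently ``uniform boundedness of $\mathbb{E}R_t$ in $t$'' is not available under the stated hypotheses. The good news is that you only need $\mathbb{E}R_t=o(t)$, and this \emph{does} hold with only a first moment: for $R_0=0$ one has $R_t\overset{d}{=}\sup_{s\le t}X_s$, and $t^{-1}\sup_{s\le t}X_s$ is dominated by $t^{-1}\bigl(a^+t+\sum_{s\le t}\Delta X_s\bigr)$ plus a negligible Brownian/compensated-small-jump term; the dominating family converges a.s.\ and in $L^1$, hence is uniformly integrable, so $t^{-1}\sup_{s\le t}X_s\to0$ in $L^1$ because it tends to $0$ a.s.\ under $\mathbb{E}X_1<0$. (Alternatively, argue directly that $t^{-1}\mathbb{E}I_t=t^{-1}\mathbb{E}[(-\inf_{s\le t}X_s-R_0)^+]\to-\mathbb{E}X_1$ by the same a.s.-limit-plus-uniform-integrability device.) With that repair the proof closes and agrees with the referenced one.
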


 We end this section by presenting
the famous Kella-Whitt martingale associated with spectrally
positive L\'{e}vy process $X$ with exponent $\varphi(\theta)$ that
we will apply it to analyze our fluid queue.
\begin{proposition}\label{pro2.4}
Let $X$ be a spectrally positive L\'{e}vy process with exponent
$\varphi(\theta)$. Let $L_t=\int_0^t dL_s^c+\sum_{0\leq s\leq
t}\Delta L_s$ be an adapted c\`{a}dl\`{a}g process of bounded
variation on finite intervals with continuous part $\{L_t^c\}$ and
jumps $\Delta L_s=L_s-L_{s-}$, and define $Z_t=Z_0+X_t+L_t$, where
$Z_0\in\mathcal{F}_0$. Then
\begin{align*}
M_t=&\varphi(\theta)\int_0^t \exp\{-\theta Z_s\}ds+\exp\{-\theta
Z_0\}-\exp\{-\theta Z_t\} -\theta \int_0^t\exp\{-\theta Z_s\}
dY_s^c\\&+\sum_{0\leq s\leq t}[\exp\{-\theta Z_s\}-\exp\{-\theta
Z_s-\Delta L_s\}]
\end{align*}
is a local martingale. In addition, if the expected variation of
$\{L_t^c, t\geq0\}$ and the expected number of jumps of $\{L_t,
t\geq0\}$ are finite on every finite interval and $\{Z_t, t\geq0\}$
is a non-negative process (or bounded below), then $\{M_t, t\geq0\}$
is a martingale.
\end{proposition}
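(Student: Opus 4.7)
The plan is to apply the Itô-Meyer change-of-variables formula for semimartingales to $f(z)=e^{-\theta z}$ composed with $Z_t=Z_0+X_t+L_t$, and to show that the terms arrange themselves so that the ``drift'' contribution is exactly $\varphi(\theta)\int_0^t e^{-\theta Z_{s-}}\,ds$, with everything else being either one of the explicit terms in $M_t$ or a local martingale. First I would note that $Z$ is a semimartingale whose continuous part is $Z_0 + \sigma B_t + \mu t + L_t^c$ (where $\mu,\sigma$ come from the L\'evy-It\^o decomposition of $X$) and whose jumps split a.s.\ as $\Delta Z_s=\Delta X_s+\Delta L_s$, the jumps of $X$ being carried by a Poisson random measure $N(ds,dx)$ with compensator $ds\otimes\nu(dx)$.

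Next I would apply It\^o's formula to obtain
\begin{align*}
e^{-\theta Z_t}-e^{-\theta Z_0}
&=-\theta\!\int_0^t e^{-\theta Z_{s-}}\,dZ_s
+\tfrac{\theta^2 \sigma^2}{2}\!\int_0^t e^{-\theta Z_{s-}}\,ds\\
&\quad +\sum_{0<s\leq t}\bigl(e^{-\theta Z_s}-e^{-\theta Z_{s-}}+\theta e^{-\theta Z_{s-}}\Delta Z_s\bigr),
\end{align*}
then split the integrator and the jump sum according to whether the contribution comes from $X$ or from $L$. The $X$-pieces I would further split via Proposition 2.2: the integral $-\theta\int e^{-\theta Z_{s-}}dX_s$ contributes the $\mu$-drift and the Brownian stochastic integral; the $X$-jump sum, once compensated against $\nu(dx)\,ds$, produces the Poisson-compensator martingale plus $\int_0^t e^{-\theta Z_{s-}}\,ds\int_{\mathbb{R}_+}(e^{-\theta x}-1+\theta x\mathbf{1}_{|x|<1})\nu(dx)$. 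Combining with the $\tfrac12 \theta^2\sigma^2$ term and $-\theta \mu$, what emerges is precisely $\varphi(\theta)\int_0^t e^{-\theta Z_{s-}}\,ds$ (using the formula for $\varphi$ displayed earlier in Section 2). The $L$-pieces contribute exactly $-\theta\int_0^t e^{-\theta Z_{s-}}\,dL_s^c$ from the continuous part and the stated telescoped jump sum $\sum_{0\leq s\leq t}[e^{-\theta Z_s}-e^{-\theta(Z_{s-}+\Delta L_s)}]$. Rearranging yields $M_t=(\text{Brownian stochastic integral})+(\text{compensated Poisson integral})$, which is a local martingale.

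For the final assertion that $M_t$ is a genuine martingale (not just local), I would pick a localizing sequence of stopping times $\tau_n\uparrow\infty$ and check that each summand of $M_{t\wedge\tau_n}$ admits an integrable dominating bound on any finite interval: when $Z\ge 0$ (or bounded below), $e^{-\theta Z_{s-}}$ is bounded, so $\varphi(\theta)\int_0^t e^{-\theta Z_s}\,ds$ is bounded, the $L^c$-integral is dominated by $\mathbb{E}|L^c|_t<\infty$, and the jump sum is dominated by twice the expected number of jumps of $L$ on $[0,t]$, also assumed finite. Dominated convergence then passes the martingale property from $t\wedge\tau_n$ to $t$.

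The main obstacle is the bookkeeping in Step~2: one must carefully avoid double-counting the jumps of $X$ (which already appear inside $\int_0^t e^{-\theta Z_{s-}}dX_s$ under the It\^o convention) and must verify that, after adding and subtracting the $\theta x\mathbf{1}_{|x|<1}$ term to make the Poisson integral a true martingale, the leftover deterministic part assembles into exactly the L\'evy-Khintchine expression for $\varphi(\theta)$. Once this algebraic identification is done, the rest of the argument is routine localization.
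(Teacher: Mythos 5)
Your argument is essentially correct, but it is a genuinely different route from the one the paper relies on: the paper gives no proof of Proposition 2.4 at all, instead pointing to Kella--Whitt (1992) and noting that $M_t$ arises as a stochastic integral with respect to the Wald martingale $N_t=\exp\{-\theta X_t-\varphi(\theta)t\}$. That route applies stochastic integration by parts to $e^{\varphi(\theta)s-\theta(Z_0+L_s)}$ against $N_s$, so the exponent $\varphi$ enters in one stroke and one never has to open up the L\'evy--It\^o decomposition; the price is that one must already know $N$ is a martingale and be comfortable with semimartingale integration by parts. Your route---It\^o--Meyer applied to $e^{-\theta Z_t}$, then reassembling $-\theta\mu+\tfrac12\theta^2\sigma^2+\int(e^{-\theta x}-1+\theta x\mathbf{1}_{|x|<1})\nu(dx)$ into $\varphi(\theta)$---is more elementary in spirit but carries all the bookkeeping you yourself flag. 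Two small points there deserve care. First, your description of the ``continuous part'' of $Z$ as $Z_0+\sigma B_t+\mu t+L_t^c$ is loose: the compensated small-jump integral is a purely discontinuous martingale, not part of the continuous component; what your It\^o formula actually uses is only $[Z]^c_t=\sigma^2 t$, which is correct. Second, since $L$ may jump at the same instants as $X$, the jump correction does not split additively; you must telescope in the specific order
\begin{align*}
e^{-\theta Z_s}-e^{-\theta Z_{s-}}=\bigl[e^{-\theta Z_s}-e^{-\theta(Z_s-\Delta L_s)}\bigr]+\bigl[e^{-\theta(Z_{s-}+\Delta X_s)}-e^{-\theta Z_{s-}}\bigr],
\end{align*}
so that the first bracket is exactly the $L$-jump sum appearing in $M_t$ and the second is the term compensated against $\nu(dx)\,ds$. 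With that done, your localization step is fine (note that $Z_s-\Delta L_s=Z_{s-}+\Delta X_s\geq 0$ by spectral positivity, so each jump summand is indeed bounded by $2$), and the conclusion follows as you say.
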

The Kella-Whitt martingale is a stochastic integral with respect to
the Wald martingale $N_t=\exp\{-\theta X_t-\varphi(\theta)t\}$ and
the readers may refer to the paper \cite{Kella1992} for its proof.

\section{Model formulation}\label{sec3}
We consider a single fluid queue with a (nondecreasing) L\'{e}vy
input and available-processing (or service) processes. For any
$t\geq0$, let $X_t$ be the cumulative input of fluid over the
interval $[0,t]$. We assume that $X$ is a subordinator, that is, a
L\'{e}vy process, having the exponent
$$\phi(\theta)=-a\theta+\int_{\mathbb{R}_+}(e^{-\theta
x}-1)\nu(dx),$$ where $a\geq0$ and
$\int_{\mathbb{R}_+}x\nu(dx)<\infty$.
 Let $S(t)$ be the cumulative
available processing over the interval $[0,t]$. Here, we assume the
server processes the fluid at a constant deterministic rate $r$
(whenever the fluid level is positive). So that $S(t)=rt, \ \
t\geq0$. We assume the storage space is unlimited. Let $\{\tau_n,
n\geq1\}$ be the strictly increasing sequence of stopping times at
which the server fails. When the server fails, it is repaired
immediately and the time required to repair it is a positive random
variable $\xi_n $ which is $\mathcal{F}_{\tau_n}$-measurable for
$n\geq1$. When the system becomes empty, the server takes a vacation
of random length $\eta_n$ which is
$\mathcal{F}_{\sigma_n}$-measurable for $n\geq1$, where $\{\sigma_n,
n\geq1\}$ is a strictly increasing sequence of stopping times at
which the server is leaving for a vacation. Vacations continue
until, on return from a vacation, the server finds the system is
non-empty.

Let $W_t$ represent the virtual waiting time at time $t$. Denote the
initial workload by $W_0\in\mathcal{F}_0$. Then $W_t$ can be defined
by (the empty sum is zero)
\begin{align}\label{eq3.1}
W_t=W_0+X_t-rt+\sum_{i=1}^{N_t^R}\xi_i+\sum_{i=1}^{N_t^V}\eta_i, \ \
\ t\geq0,\end{align}
 where
$$N_t^R=\sup\{n|\tau_n\leq t\}, \ \ \ N_t^V=\sup\{n|\sigma_n\leq
t\},$$
$$\sigma_n=\inf\left\{t\geq0\bigg|W_0+X_t-rt+\sum_{i=1}^{N_t^R}\xi_i+\sum_{i=1}^{n-1}\eta_i=0\right\}, \ \ \ n\geq1.$$

\begin{remark}
It should be noted that here $W_t$, $t\geq0$, is not the workload
process. However, if $\xi_n$ represents the total arriving workload
during the $n$th repair period and $\eta_n$ corresponds to the total
arriving workload during the $n$th vacation period, then $W_t$ is
the workload in the system conditioned on being in the active
periods which are defined as the normal processing fluid periods
(excluding the repair periods and vacation periods).
\end{remark}

Define $Y_t=X_t-rt$. Then $Y$ is a spectrally positive L\'{e}vy
process with exponent $\varphi(\theta)=\phi(\theta)+r\theta$. Denote
$\rho=-\phi'(0)$ and impose the conditions $\rho<r$ and
$\mathbb{E}|Y_t|<\infty$ for all $t\geq0$.

\section{The steady-state distribution}\label{sec4}
In this section, we characterize the limiting distribution of the
virtual waiting time $W_t$. Let $\overset{d}{\to}$ denote
convergence in distribution and let $\overset{p}{\to}$ denote
convergence in probability. Under the condition $\rho<r$, we suppose
$W_t\overset{d}{\to}W$ and $t^{-1}\mathbb{E}W_t\to0$, as
$t\to\infty$, where $W$ is a random variable.

In order to derive the steady-state distribution of the virtual
waiting time $W_t$, we give the following lemmas.
\begin{lemma}\label{lem4.1}
If $E(N_t^R+N_t^V)<\infty$ for all $t$, then $\{M_t, t\geq0\}$ is a
zero-mean real-valued martingale with respect to $\{\mathcal{F}_t,
t\geq0\}$, where
\begin{align}\label{eq4.1}
M_t=\varphi(\theta)\int_0^t e^{-\theta W_s}ds+e^{-\theta
W_0}-e^{-\theta W_t}
-\sum_{k=1}^{N_t^R}\left[e^{-\theta(W_{\tau_k}-\xi_k)}-e^{-\theta
W_{\tau_k}}\right]-\sum_{k=1}^{N_t^V}(1-e^{-\theta\eta_k}).
\end{align}
\end{lemma}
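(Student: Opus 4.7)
The plan is to cast $W_t$ in the Kella--Whitt form and then apply Proposition~\ref{pro2.4}. Setting
$$L_t := \sum_{i=1}^{N_t^R}\xi_i + \sum_{i=1}^{N_t^V}\eta_i,$$
equation~\eqref{eq3.1} reads $W_t = W_0 + Y_t + L_t$, where $Y_t = X_t - rt$ is a spectrally positive L\'{e}vy process of exponent $\varphi(\theta)=\phi(\theta)+r\theta$. Proposition~\ref{pro2.4} therefore applies with $Z=W$.

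I would first verify its hypotheses. The process $L$ is adapted because the $\tau_n,\sigma_n$ are stopping times and $\xi_n,\eta_n$ are measurable with respect to the corresponding stopping-time $\sigma$-algebras; it is c\`{a}dl\`{a}g and purely discontinuous, so $L^c\equiv 0$ and the term $-\theta\int_0^t e^{-\theta W_s}\,dL^c_s$ in Proposition~\ref{pro2.4} drops out. The hypothesis $E(N_t^R+N_t^V)<\infty$ gives bounded variation of $L$ on $[0,t]$ a.s.\ and the finite expected number of jumps needed to upgrade the Kella--Whitt local martingale to a true martingale; nonnegativity of $W$, the remaining requirement, is built into the model through the definition of $\sigma_n$ as a first hitting time of $0$.

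Next I would identify the jump sum
$$\sum_{0\le s\le t}\bigl[e^{-\theta W_s}-e^{-\theta(W_s-\Delta L_s)}\bigr]$$
with the last two sums in \eqref{eq4.1}. Its support is $\{\tau_k:1\le k\le N_t^R\}\cup\{\sigma_k:1\le k\le N_t^V\}$. At $s=\tau_k$ one has $\Delta L_s=\xi_k$, giving the summand $e^{-\theta W_{\tau_k}}-e^{-\theta(W_{\tau_k}-\xi_k)}$. At $s=\sigma_k$ the infimum defining $\sigma_k$ is attained, so
$$W_{\sigma_k}-\eta_k \;=\; W_0+Y_{\sigma_k}+\sum_{i=1}^{N_{\sigma_k}^R}\xi_i+\sum_{i=1}^{k-1}\eta_i \;=\; 0,$$
reducing that summand to $e^{-\theta\eta_k}-1$. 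Pulling the overall minus signs outside and collecting terms yields \eqref{eq4.1}, and $M_0=0$ makes the martingale zero-mean.

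The main obstacle, small as it is, lies in the previous display: one must justify that the infimum defining $\sigma_k$ is attained. This is automatic since the expression inside the infimum is c\`{a}dl\`{a}g with only positive jumps (from $X$ and the $\xi_i$) and a continuous downward drift $-rt$, hence can first reach $0$ only by continuous descent. Once this is in place the lemma is a direct substitution into the Kella--Whitt formula.
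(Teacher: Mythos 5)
Your proposal is correct and follows the same route as the paper: write $W_t=W_0+Y_t+L_t$ with $L$ the pure-jump repair/vacation process and apply Proposition~\ref{pro2.4}, using $E(N_t^R+N_t^V)<\infty$ and $W\geq 0$ to upgrade the local martingale to a martingale. The paper dismisses the identification of the jump sum as ``a trivial manipulation,'' whereas you supply the one genuinely needed detail --- that $W_{\sigma_k}-\eta_k=0$ because the first passage to zero occurs by continuous descent --- which is exactly what turns the vacation terms into $-(1-e^{-\theta\eta_k})$.
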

\begin{proof}
Let $H_t=\sum_{i=1}^{N_t^R}\xi_i+\sum_{i=1}^{N_t^V}\eta_i$. Then
$W_t=W_0+Y_t+H_t$ and $\{H_t, t\geq\}$ is adapted of bounded
variation on every finite interval, and $H_t^c=0$ for all $t\geq0$.
Since $E(N_t^R+N_t^V)<\infty$, by Proposition \ref{pro2.4} and
through a trivial manipulation, we get that \eqref{eq4.1} is a
martingale.
\end{proof}

\begin{lemma}\label{lem4.2}
Let $T$ is a nonnegative random variable with possible integer
values $0,1,2, \cdots$ and $\mathbb{E}T<\infty$. If $\{\zeta_n,
n\geq1\}$ is an i.i.d. sequence with $\zeta_n$ independent of
$\mathbf{1}_{\{T\geq n\}}$ and $\mathbb{E}\zeta_1<\infty$, then
$$\mathbb{E}\sum_{n=1}^T\zeta_n=(\mathbb{E}\zeta_1)(\mathbb{E}T).$$
\end{lemma}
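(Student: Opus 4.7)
The plan is to realize the random-length sum as an infinite sum of indicator-truncated terms, then use Fubini--Tonelli together with the hypothesized independence of $\zeta_n$ and $\mathbf{1}_{\{T\geq n\}}$, and finally invoke the standard identity $\mathbb{E}T=\sum_{n=1}^{\infty}\mathbb{P}(T\geq n)$, which is valid for any nonnegative integer-valued random variable.

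First I would write, almost surely,
\begin{equation*}
\sum_{n=1}^{T}\zeta_n=\sum_{n=1}^{\infty}\zeta_n\,\mathbf{1}_{\{T\geq n\}},
\end{equation*}
since on $\{T=k\}$ both sides collapse to $\zeta_1+\cdots+\zeta_k$ (with the convention that the empty sum is $0$ on $\{T=0\}$). Next, assuming $\zeta_n\geq 0$ (which is the case of interest in the application of this lemma to $\xi_k$ and $\eta_k$ in Lemma \ref{lem4.1}), Tonelli's theorem allows me to exchange expectation and summation, and the independence of $\zeta_n$ and $\mathbf{1}_{\{T\geq n\}}$ gives
\begin{equation*}
\mathbb{E}\sum_{n=1}^{T}\zeta_n
=\sum_{n=1}^{\infty}\mathbb{E}\bigl[\zeta_n\,\mathbf{1}_{\{T\geq n\}}\bigr]
=\sum_{n=1}^{\infty}(\mathbb{E}\zeta_n)\,\mathbb{P}(T\geq n)
=(\mathbb{E}\zeta_1)\sum_{n=1}^{\infty}\mathbb{P}(T\geq n)
=(\mathbb{E}\zeta_1)(\mathbb{E}T),
\end{equation*}
using the i.i.d.\ assumption in the third equality and the standard tail-sum formula for $\mathbb{E}T$ in the last.

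The only genuinely technical point is the interchange of sum and expectation when $\zeta_n$ is allowed to take both signs. In that case I would separately apply the above argument to $\zeta_n^{+}$ and $\zeta_n^{-}$; the finiteness of $\mathbb{E}|\zeta_1|$ together with $\mathbb{E}T<\infty$ yields
\begin{equation*}
\sum_{n=1}^{\infty}\mathbb{E}\bigl[|\zeta_n|\,\mathbf{1}_{\{T\geq n\}}\bigr]=\mathbb{E}|\zeta_1|\cdot\mathbb{E}T<\infty,
\end{equation*}
which certifies absolute integrability and justifies the use of Fubini's theorem for the signed series. This is the only step requiring care; the rest is a direct computation.
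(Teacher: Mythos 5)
Your proof is correct and follows essentially the same route as the paper: rewrite $\sum_{n=1}^{T}\zeta_n$ as $\sum_{n=1}^{\infty}\zeta_n\mathbf{1}_{\{T\geq n\}}$, interchange expectation and summation, factor by independence, and apply the tail-sum formula for $\mathbb{E}T$. If anything, your justification of the interchange (Tonelli for the nonnegative case, Fubini with the absolute-integrability bound $\mathbb{E}|\zeta_1|\cdot\mathbb{E}T<\infty$ for the signed case) is more careful than the paper's appeal to the bounded convergence theorem.
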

\begin{proof}
Since $\zeta_n$ is independent of $\mathbf{1}_{\{T\geq n\}}$, the
bounded convergence theorem yields
\begin{align*}
\mathbb{E}[\sum_{n=1}^T\zeta_n]&=\mathbb{E}[\sum_{n=1}^\infty\zeta_n\mathbf{1}_{\{T\geq
n\}}]=\sum_{n=1}^\infty(\mathbb{E}\zeta_n)(\mathbb{E}\mathbf{1}_{\{T\geq
n\}})=\mathbb{E}\zeta_1\sum_{n=1}^\infty(\mathbb{E}\mathbf{1}_{\{T\geq n\}})\\
&=\mathbb{E}\zeta_1\sum_{n=1}^\infty \mathbb{P}(T\geq
n)=(\mathbb{E}\zeta_1)(\mathbb{E}T).
\end{align*}
\end{proof}

With the help of the above two lemmas, we can derive the limiting
distribution of the virtual waiting time provided in the following
theorem which is the most important result of this paper.
\begin{theorem}\label{thm4.1}
Let $\{\xi_n, n\geq1\}$ and $\{\eta_n, n\geq1\}$ be two positive
i.i.d. sequences with  $\mathbb{E}\xi_1<\infty$ and
$\mathbb{E}\eta_1<\infty$, respectively. Suppose that
$n^{-1}\tau_n\overset{p}{\to}\lambda_R^{-1}$ and
$n^{-1}\sigma_n\overset{p}{\to}\lambda_V^{-1}$ as $n\to\infty$ for
$0<\lambda_R^{-1}<\infty$ and $0<\lambda_V^{-1}<\infty$,
respectively. Further, assume that $\{W_{\tau_k}, k\geq1\}$ and
$\{W_{\tau_k}-\xi_k, k\geq1\}$ are stationary and ergodic that
$(W_{\tau_k}-\xi_k, W_{\tau_k})\overset{d}{\to}(W^-,W^+)$ where
$W^-,W^+$ are two proper random variables. If $\xi_n$ is independent
of $\mathbf{1}_{\{N_1^R\geq n\}}$ and $\eta_n$ is independent of
$\mathbf{1}_{\{N_1^V\geq n\}}$, then for $\theta>0$,
\begin{align}\label{eq4.2}
\lambda_R=\frac{p\varphi'(0)}{\mathbb{E}\xi_1}, \ \ \
\lambda_V=\frac{(1-p)\varphi'(0)}{\mathbb{E}\eta_1},
\end{align}
\begin{align}\label{eq4.3}
\lim_{t\to\infty}t^{-1}\mathbb{E}\sum_{k=1}^{N_t^R}\xi_k=-p\mathbb{E}Y_1,
\ \ \
\lim_{t\to\infty}t^{-1}\mathbb{E}\sum_{k=1}^{N_t^R}\eta_k=-(1-p)\mathbb{E}Y_1,
\end{align}
\begin{align}\label{eq4.4}
\lim_{t\to\infty}\mathbb{E}e^{-\theta W_t}=\mathbb{E}e^{-\theta
W}=\frac{\theta\varphi'(0)}{\varphi(\theta)}\left[p\frac{\mathbb{E}e^{-\theta
W^-}-\mathbb{E}e^{-\theta
W^+}}{\theta\mathbb{E}\xi_1}+(1-p)\frac{1-\mathbb{E}e^{-\theta\eta_1}}{\theta\mathbb{E}\eta_1}\right],
\end{align}
where $0\leq p\leq1$.
\end{theorem}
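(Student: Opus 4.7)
My plan is to exploit the zero-mean property of the Kella-Whitt martingale from Lemma~\ref{lem4.1}: taking expectations in \eqref{eq4.1} and rearranging gives
\begin{align*}
\varphi(\theta)\int_0^t \mathbb{E}e^{-\theta W_s}\,ds + \mathbb{E}e^{-\theta W_0} - \mathbb{E}e^{-\theta W_t}
= \mathbb{E}\sum_{k=1}^{N_t^R}\bigl[e^{-\theta(W_{\tau_k}-\xi_k)} - e^{-\theta W_{\tau_k}}\bigr]
+ \mathbb{E}\sum_{k=1}^{N_t^V}\bigl(1 - e^{-\theta\eta_k}\bigr).
\end{align*}
Divide throughout by $t$ and let $t\to\infty$. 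The endpoint terms $t^{-1}\mathbb{E}e^{-\theta W_0}$ and $t^{-1}\mathbb{E}e^{-\theta W_t}$ vanish trivially because the integrands lie in $[0,1]$, and the Cesaro average $t^{-1}\int_0^t \mathbb{E}e^{-\theta W_s}\,ds$ converges to $\mathbb{E}e^{-\theta W}$ by $W_t\overset{d}{\to}W$ and bounded convergence.

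For the right-hand side, the hypotheses $n^{-1}\tau_n\overset{p}{\to}\lambda_R^{-1}$ and $n^{-1}\sigma_n\overset{p}{\to}\lambda_V^{-1}$, via the standard inversion between a counting process and its jump times, yield $N_t^R/t\to\lambda_R$ and $N_t^V/t\to\lambda_V$. Applying the ergodic theorem to the stationary-ergodic sequences $\{W_{\tau_k}\}$ and $\{W_{\tau_k}-\xi_k\}$ gives
\begin{align*}
\frac{1}{N_t^R}\sum_{k=1}^{N_t^R} e^{-\theta W_{\tau_k}} \to \mathbb{E}e^{-\theta W^+},
\qquad
\frac{1}{N_t^R}\sum_{k=1}^{N_t^R} e^{-\theta(W_{\tau_k}-\xi_k)} \to \mathbb{E}e^{-\theta W^-},
\end{align*}
and since the summands are bounded by $1$, bounded convergence upgrades these to $L^1$ statements. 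Multiplying by $N_t^R/t\to\lambda_R$ produces the repair-sum limit $\lambda_R(\mathbb{E}e^{-\theta W^-}-\mathbb{E}e^{-\theta W^+})$. For the vacation sum, Lemma~\ref{lem4.2} with $T=N_t^V$ and $\zeta_n = 1-e^{-\theta\eta_n}$ gives $\mathbb{E}\sum_{k=1}^{N_t^V}(1-e^{-\theta\eta_k}) = (1-\mathbb{E}e^{-\theta\eta_1})\,\mathbb{E}N_t^V$, so dividing by $t$ and passing to the limit yields $\lambda_V(1-\mathbb{E}e^{-\theta\eta_1})$.

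To identify $\lambda_R$ and $\lambda_V$, apply the same renormalization to \eqref{eq3.1}. Taking expectations, dividing by $t$, letting $t\to\infty$, and invoking $t^{-1}\mathbb{E}W_t\to 0$, $\mathbb{E}Y_1=-\varphi'(0)$, together with Lemma~\ref{lem4.2} (now with $\zeta_n=\xi_n$ and $\zeta_n=\eta_n$) yields the drift balance
\begin{align*}
\mathbb{E}\xi_1\,\lambda_R + \mathbb{E}\eta_1\,\lambda_V = \varphi'(0).
\end{align*}
Introducing $p\in[0,1]$ via $\mathbb{E}\xi_1\,\lambda_R = p\varphi'(0)$ produces \eqref{eq4.2}; multiplying back delivers \eqref{eq4.3}. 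Substituting \eqref{eq4.2} into the asymptotic martingale identity and solving for $\mathbb{E}e^{-\theta W}$ gives \eqref{eq4.4} after elementary algebra.

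The most delicate step will be the passage to the limit in the random-index sum involving $\{W_{\tau_k}\}$: the stationary-ergodic hypothesis is postulated, but one must still combine Birkhoff's theorem on the deterministic index $k$ with the random time change $N_t^R\to\infty$, and then promote almost sure convergence to convergence of expectations. The uniform bound of $1$ on the summands is what licenses the interchange, and a subsidiary technicality is upgrading the hypothesized $\tau_n/n\overset{p}{\to}\lambda_R^{-1}$ to the $L^1$ asymptotics $t^{-1}\mathbb{E}N_t^R\to\lambda_R$ needed in the drift-balance step.
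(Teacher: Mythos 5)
Your proof is correct and rests on the same two pillars as the paper's: the zero-mean Kella--Whitt martingale of Lemma \ref{lem4.1} for the transform identity, and the drift balance obtained from \eqref{eq3.1} together with Lemma \ref{lem4.2} to pin down $\lambda_R$ and $\lambda_V$ via the splitting parameter $p$. The one genuine difference is in how the stationary transform is extracted from the martingale: the paper initializes with $W_0=W^*$ (an independent copy of $W$) so that $\{W_t\}$ is stationary and then simply sets $t=1$ in $\mathbb{E}M_t=0$, whereas you divide by $t$, let $t\to\infty$, and handle the random-index sums with Birkhoff's theorem along $N_t^R\to\infty$. Your route makes explicit where the stationarity-and-ergodicity hypothesis on $\{W_{\tau_k}\}$ actually enters, which the paper leaves implicit when it applies a Wald-type identity to the non-i.i.d.\ summands $e^{-\theta(W_{\tau_k}-\xi_k)}-e^{-\theta W_{\tau_k}}$ at $t=1$; the price is the extra interchange-of-limits bookkeeping you flag at the end. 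That flagged step --- passing from $N_t^R/t\to\lambda_R$ a.s.\ to $t^{-1}\mathbb{E}N_t^R\to\lambda_R$, i.e.\ uniform integrability of $N_t^R/t$ --- is indeed the only soft spot, but the paper asserts the corresponding convergence ($t^{-1}N_t^R\to\mathbb{E}N_1^R$ w.p.1 and its use in expectation) with no more justification than you give, so you are not missing anything the authors supply.
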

\begin{proof}
Dividing \eqref{eq3.1} by $t$ and taking expectation, we get
\begin{align}\label{eq4.5}
t^{-1}\mathbb{E}W_t=t^{-1}\mathbb{E}W_1+\mathbb{E}Y_1+t^{-1}\mathbb{E}\sum_{k=1}^{N_t^R}\xi_k+t^{-1}\mathbb{E}\sum_{k=1}^{N_t^V}\eta_k.
\end{align}
where $t^{-1}\mathbb{E}W_t\to0$ by the assumption. Thus, we have
proved \eqref{eq4.3}.

 Taking $W_0=W^*$ where $W^*$ is an independent copy of $W$, $\{W_t,
 t\geq0\}$ becomes stationary. By Lemma \ref{lem4.2} and \eqref{eq3.1}, we get
 \begin{align}\label{eq4.6}
\mathbb{E}N_1^R\mathbb{E}\xi_1+\mathbb{E}N_1^V\mathbb{E}\eta_1=-\mathbb{E}Y_1.
 \end{align}
So that,
\begin{align*}
\mathbb{E}N_1^R\mathbb{E}\xi_1=-p\mathbb{E}Y_1, \ \ \
\mathbb{E}N_1^V\mathbb{E}\eta_1=-(1-p)\mathbb{E}Y_1.
\end{align*}
Hence,
\begin{align}\label{eq4.7}
\mathbb{E}N_1^R=\frac{p\varphi'(0)}{\mathbb{E}\xi_1}, \ \ \
\mathbb{E}N_1^V=\frac{(1-p)\varphi'(0)}{\mathbb{E}\eta_1}.
\end{align}

Conditions $n^{-1}\tau_n\overset{p}{\to}\lambda_R^{-1}$ and
$n^{-1}\sigma_n\overset{p}{\to}\lambda_V^{-1}$ as $n\to\infty$ imply
that $t^{-1}N_t^R\overset{p}{\to}\lambda_R$ and
$t^{-1}N_t^V\overset{p}{\to}\lambda_V$. Together with
$t^{-1}N_t^R\to \mathbb{E}N_1^R$ and $t^{-1}N_t^V\to
\mathbb{E}N_1^V$ w.p.1 as $t\to\infty$, this implies \eqref{eq4.2}.

 From Lemma \ref{lem4.2}, optional stopping at $t=1$ in \eqref{eq4.1}
yields
\begin{align}\label{eq4.8}
0=\varphi(\theta)\mathbb{E}e^{-\theta
W}-\mathbb{E}N_1^R\left[\mathbb{E}e^{-\theta
W^-}-\mathbb{E}e^{-\theta
W^+}\right]-\mathbb{E}N_1^V(1-\mathbb{E}e^{-\theta\eta_1})
\end{align}
Substituting \eqref{eq4.7} into \eqref{eq4.8}, we obtain
\eqref{eq4.4}.

\end{proof}

\begin{remark}
From \cite{Chung1974}, we have $\varphi(\theta)=0$ for some
$\theta>0$ if and only if $Y_t$ has a lattice distribution. But for
a  L\'{e}vy process with no negative jumps and $\mathbb{E}Y_t<0$, it
is not possible. In addition, when $N_t^R\equiv0$ for all $t$, i.e.,
$p=0$, the present system reduces to the L\'{e}vy process-driven
queue with server vacations. It may be noted that the equation
\eqref{eq4.4} after putting $p=0$ agrees with the equation (4.6)
presented in Kella and Whitt \cite{Kella1991}. We should note that
when $N_t^V\equiv0$ for all $t$, the present system reduces to the
L\'{e}vy process-driven queue with server breakdowns. However, the
equation \eqref{eq4.4} in Theorem \ref{thm4.1} after putting $p=1$
cannot characterize the limiting distribution of L\'{e}vy-driven
queue with server breakdowns due to the fact that $\{W_t, t\geq0\}$
is not a non-negative process.
\end{remark}

\begin{corollary}\label{cor4.1}
When the system is stable, we have
\begin{align*}
\omega=\mathbb{E}W=\frac{\varphi''(0)}{2\varphi'(0)}+p\frac{\mathbb{E}(W^+)^2-\mathbb{E}(W^-)^2}{2\mathbb{E}\xi_1}
+(1-p)\frac{\mathbb{E}\eta_1^2}{2\mathbb{E}\eta_1},
\end{align*}
\begin{align*}
v=&\mathbb{V}ar
W=\frac{1}{3}\frac{\varphi'''(0)}{\varphi'(0)}-\frac{1}{4}\left(\frac{\varphi''(0)}{\varphi'(0)}\right)^2
+(1-p)\left[\frac{1}{3}\frac{\mathbb{E}\eta_1^3}{\mathbb{E}\eta_1}
-\frac{1}{4}\left(\frac{\mathbb{E}\eta_1^2}{\mathbb{E}\eta_1}\right)^2\right]\\
&+p\left[\frac{1}{3}\frac{\mathbb{E}(W^+)^3-\mathbb{E}(W^-)^3}{\mathbb{E}\xi_1}
-\frac{1}{4}\left(\frac{\mathbb{E}(W^+)^2-\mathbb{E}(W^-)^2}{\mathbb{E}\xi_1}\right)^2\right].
\end{align*}
\end{corollary}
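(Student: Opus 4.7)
The plan is to read off the first two moments of $W$ from its Laplace--Stieltjes transform~\eqref{eq4.4} by expanding both sides around $\theta=0$. Since $\mathbb{E}e^{-\theta W}=1-\theta\omega+\tfrac12\theta^2\mathbb{E}W^2+O(\theta^3)$ and $v=\mathbb{E}W^2-\omega^2$, it suffices to expand the right-hand side of~\eqref{eq4.4} to order $\theta^2$ and match coefficients.

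I would write the right-hand side as $A(\theta)B(\theta)$ with
\begin{equation*}
A(\theta)=\frac{\theta\varphi'(0)}{\varphi(\theta)},\qquad B(\theta)=p\,B_R(\theta)+(1-p)B_V(\theta),
\end{equation*}
where $B_R(\theta)=(\mathbb{E}e^{-\theta W^-}-\mathbb{E}e^{-\theta W^+})/(\theta\mathbb{E}\xi_1)$ and $B_V(\theta)=(1-\mathbb{E}e^{-\theta\eta_1})/(\theta\mathbb{E}\eta_1)$. For $A$, I would invert the Taylor series $\varphi(\theta)/\theta=\varphi'(0)+\tfrac12\varphi''(0)\theta+\tfrac16\varphi'''(0)\theta^2+O(\theta^3)$ via the geometric series $(1+u)^{-1}=1-u+u^2+O(u^3)$ to obtain $A(\theta)=1+A_1\theta+A_2\theta^2+O(\theta^3)$, with $A_1,A_2$ explicit in $\varphi'(0),\varphi''(0),\varphi'''(0)$. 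For $B_R$ and $B_V$, I would substitute the moment expansions of $\mathbb{E}e^{-\theta W^{\pm}}$ and $\mathbb{E}e^{-\theta\eta_1}$ and cancel one factor of $\theta$ between numerator and denominator, producing $B(\theta)=1+B_1\theta+B_2\theta^2+O(\theta^3)$.

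The normalization $\mathbb{E}e^{-\theta W}|_{\theta=0}=1$ forces $B_R(0)=B_V(0)=1$: the latter is L'H\^opital, while the former rests on the stationarity identity $\mathbb{E}W^+-\mathbb{E}W^-=\mathbb{E}\xi_1$, which follows from $W_{\tau_k}=(W_{\tau_k}-\xi_k)+\xi_k$ by taking expectations and passing to the stationary limit. With the three expansions in hand, the $\theta$-coefficient of $A(\theta)B(\theta)$ equals $-\omega$ and yields the stated formula for $\omega$; the $\theta^2$-coefficient equals $\tfrac12\mathbb{E}W^2$, and $v=\mathbb{E}W^2-\omega^2$ follows by subtraction.

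The main obstacle is purely algebraic: the cross term $A_1B_1$ entering the $\theta^2$-coefficient of the product must be combined with $A_2$ and $B_2$, and I must track how powers of $\varphi''(0)/\varphi'(0)$ interact with the moment ratios $\mathbb{E}(W^{\pm})^k/\mathbb{E}\xi_1$ and $\mathbb{E}\eta_1^k/\mathbb{E}\eta_1$ so that, after forming $\mathbb{E}W^2-\omega^2$, the expression collapses to the claimed three-term form involving the L\'evy-exponent contribution, the breakdown contribution weighted by $p$, and the vacation contribution weighted by $1-p$. No new probabilistic input beyond the moment identity above is needed.
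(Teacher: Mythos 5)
Your method---Taylor expansion of the transform \eqref{eq4.4} around $\theta=0$ and coefficient matching---is the right (and essentially the only) route here; the paper states the corollary without proof and clearly intends exactly this computation. Your treatment of the normalization is also correct: $B_R(0)=1$ does rest on $\mathbb{E}W^+-\mathbb{E}W^-=\mathbb{E}\xi_1$, and the expansion of the $\theta$-coefficient does yield the stated formula for $\omega$. The gap is in the step you defer as ``purely algebraic'': the $\theta^2$-coefficient does \emph{not} collapse to the claimed three-term form, so your plan cannot be completed as written. Carrying out the computation with $A(\theta)=1-a_1\theta+(a_1^2-a_2)\theta^2+O(\theta^3)$, $a_1=\varphi''(0)/(2\varphi'(0))$, $a_2=\varphi'''(0)/(6\varphi'(0))$, and $B(\theta)=1-b_1\theta+\beta_2\theta^2+O(\theta^3)$, one gets $\mathbb{E}W^2=2(a_1^2-a_2+\beta_2+a_1b_1)$ and hence
\begin{equation*}
v=(a_1^2-2a_2)+(2\beta_2-b_1^2)
=\frac{1}{4}\left(\frac{\varphi''(0)}{\varphi'(0)}\right)^2-\frac{1}{3}\frac{\varphi'''(0)}{\varphi'(0)}
+p\,\mathbb{E}U^2+(1-p)\mathbb{E}V^2-\bigl(p\,\mathbb{E}U+(1-p)\mathbb{E}V\bigr)^2,
\end{equation*}
with $U,V$ as in Lemma \ref{lem6.1}. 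This differs from the corollary in two respects: the L\'evy-exponent contribution appears with the opposite sign (the corollary's first two terms are $-\mathbb{V}ar\,R$ rather than $\mathbb{V}ar\,R$), and the remaining contribution is the variance of the \emph{mixture} of $U$ and $V$, which exceeds $p\,\mathbb{V}ar\,U+(1-p)\mathbb{V}ar\,V$ by the cross term $p(1-p)(\mathbb{E}U-\mathbb{E}V)^2$.

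That the corrected expression, not the printed one, is what \eqref{eq4.4} implies can be checked two ways. First, it is forced by Theorem \ref{thm6.1}: $W$ is distributed as the independent sum of $R$ and the $(p,1-p)$-mixture of $U$ and $V$, so $\mathbb{V}ar\,W=\mathbb{V}ar\,R+\mathbb{V}ar(\text{mixture})$, and $\mathbb{V}ar\,R=\tfrac14(\varphi''(0)/\varphi'(0))^2-\tfrac13\varphi'''(0)/\varphi'(0)$ by expanding the Pollaczek--Khinchine factor alone. Second, a sanity check: for Brownian input ($\varphi'''(0)=0$, $R$ exponential) with $p=0$ and deterministic $\eta_1=c$, the printed formula gives $v=-\tfrac14(\varphi''(0)/\varphi'(0))^2+c^2/12$, which is negative for small $c$---impossible for a variance. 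So while your strategy is sound and matches the intended derivation, the proof attempt as proposed would fail at the final matching step: you would either have to discover and record the corrected variance formula, or incorrectly assert that the algebra closes on the statement as printed.
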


Next, we analyze the system busy period which is defined as the time
period which starts at the epoch when a vacation is completed and
the server begins processing the fluid and ends at the next
 epoch when the system is empty. Note that a busy period includes the
 normal processing time of the fluid in the system and some possible repair times of the server due to
the server breakdowns.
\begin{lemma}\label{lem4.3}
Define
$T_n=\inf\{t|Y_t+\sum_{k=1}^{N_t^R}\xi_k+\sum_{i=1}^nB_i=0\}$,
$n\geq1$, where $\{B_i, i\geq1\}$ is a positive i.i.d. sequence with
$\mathbb{E}B_1<\infty$. If $0\leq p<1$, then
\begin{align}\label{eq4.9}
\mathbb{E}T_n=\frac{n\mathbb{E}B_1}{(1-p)\varphi'(0)}.
\end{align}
\end{lemma}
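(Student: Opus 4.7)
The plan is to apply a Wald-type optional stopping identity to the auxiliary ``workload-without-vacations'' process
$$Z_t := Y_t + \sum_{k=1}^{N_t^R}\xi_k,$$
which has no negative jumps (inherited from the spectral positivity of $Y$ and the positivity of the $\xi_k$). Under this lens, $T_n$ is precisely the first-passage time of $Z$ to the level $-\sum_{i=1}^n B_i$.

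Two facts are then immediate. First, because $Z$ has no downward jumps, it must cross any strictly negative level continuously from above, so the first-passage incurs no overshoot and
$$Z_{T_n} = -\sum_{i=1}^n B_i, \qquad \mathbb{E}Z_{T_n} = -n\mathbb{E}B_1.$$
Second, applying Lemma~\ref{lem4.2} together with \eqref{eq4.7} yields
$$\mathbb{E}Z_1 = \mathbb{E}Y_1 + \mathbb{E}N_1^R\cdot \mathbb{E}\xi_1 = -\varphi'(0) + p\varphi'(0) = -(1-p)\varphi'(0),$$
which is strictly negative when $p<1$, forcing $Z_t\to -\infty$ almost surely and hence $T_n<\infty$ a.s.

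I would then apply optional stopping to the compensated process $M_t := Z_t - \mathbb{E}Z_1\cdot t$ at the bounded stopping time $T_n\wedge s$, and pass $s\to\infty$. The uniform lower bound $Z_{T_n\wedge s}\geq -\sum_i B_i$ from spectral positivity, together with integrability of $T_n$ that follows from the strict negative drift via a standard renewal/coupling argument, supplies the domination needed for the Wald-type identity
$$\mathbb{E}Z_{T_n} = \mathbb{E}Z_1 \cdot \mathbb{E}T_n.$$
Rearranging gives $\mathbb{E}T_n = n\mathbb{E}B_1/((1-p)\varphi'(0))$, as claimed.

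The main obstacle is justifying that $M_t$ is genuinely a martingale (and that optional stopping applies), because the paper assumes only $n^{-1}\tau_n\to\lambda_R^{-1}$ in probability rather than a Poisson law for $N^R$. The cleanest route around this is to take $N^R$ to be a Poisson point process (as appears consistent with the paper's queueing setup), in which case $Z$ is itself a spectrally positive L\'{e}vy process with exponent $\tilde\varphi(\theta) = \varphi(\theta) - \lambda_R(1-\mathbb{E}e^{-\theta\xi_1})$ satisfying $\tilde\varphi'(0) = (1-p)\varphi'(0)$, and the formula $\mathbb{E}T^\xi = \mathbb{E}\xi/(-\mathbb{E}Z_1)$ recalled at the end of Section~\ref{sec2} delivers \eqref{eq4.9} in one line.
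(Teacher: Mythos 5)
Your skeleton --- zero overshoot from spectral positivity, the drift computation $\mathbb{E}Z_1=-(1-p)\varphi'(0)$, and then an identity of the form $\mathbb{E}Z_{T_n}=\mathbb{E}Z_1\cdot\mathbb{E}T_n$ --- matches the paper's, but the mechanism by which you obtain that identity is where the two arguments part ways, and it is also where your proof has a genuine gap. You correctly flag the obstacle yourself: under the paper's hypotheses $\{\tau_n\}$ is an arbitrary strictly increasing sequence of stopping times and $\xi_n$ is merely $\mathcal{F}_{\tau_n}$-measurable, so $Z_t=Y_t+\sum_{k=1}^{N_t^R}\xi_k$ is not a L\'{e}vy process and the compensated process $M_t=Z_t-t\,\mathbb{E}Z_1$ need not be a martingale, so neither optional stopping nor the first-passage formula $\mathbb{E}T^\xi=\mathbb{E}\xi/(-\mathbb{E}Z_1)$ from Section \ref{sec2} is available. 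Your proposed repair --- declare the breakdown stream compound Poisson so that $Z$ becomes spectrally positive L\'{e}vy --- does give a clean one-line proof, but of a strictly stronger model than the one the lemma is stated for; nothing in Section \ref{sec3} or Theorem \ref{thm4.1} licenses the Poisson assumption, so as written your argument establishes the lemma only in that special case.

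The paper circumvents the martingale issue entirely. It first treats deterministic levels: with $T_n^b=\inf\{t\,|\,Z_t+nb=0\}$ it observes that $\{T_n^b\}_{n\geq1}$ has the structure of a random walk, so $\mathbb{E}T_n^b=n\,\mathbb{E}T_1^b$, establishes $\mathbb{E}T_1^b<\infty$ from the negative drift (citing Chung), and then identifies $\mathbb{E}T_1^b$ by a strong-law/renewal-reward passage: dividing the exact (no-overshoot) identity $Z_{T_n^b}=-nb$ by $n$ and letting $n\to\infty$, using $t^{-1}Y_t\to\mathbb{E}Y_1$ and $t^{-1}\sum_{k=1}^{N_t^R}\xi_k\to(\mathbb{E}N_1^R)(\mathbb{E}\xi_1)$ w.p.1, gives $-b=\mathbb{E}T_1^b\cdot\bigl(-(1-p)\varphi'(0)\bigr)$. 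The random level $\sum_{i=1}^nB_i$ is then handled by conditioning on the $B_i$ and unconditioning. If you want to salvage your route at the paper's level of generality, you must either prove that $M_t$ is a martingale under the actual hypotheses (it is not, in general) or replace the optional-stopping step by the almost-sure limiting argument above, which only needs the strong laws already invoked in the proof of Theorem \ref{thm4.1}.
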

\begin{proof}
Let $T_n^b=\inf\{t|Y_t+\sum_{k=1}^{N_t^R}\xi_k+nb=0\}$, $b>0$,
$n\geq1$. Since $Y$ is a spectrally positive L\'{e}vy process,
$\{T_n^b, n\geq1\}$ is a random walk with $0<T_n^b<T_{n+1}^b$ w.p.1.
Thus, $\mathbb{E}T_n^b=n\mathbb{E}T_1^b$. Since $\mathbb{E}Y_t<0$
and $0\leq p<1$,
$\mathbb{E}(Y_t+\sum_{k=1}^{N_t^R}\xi_k)=-(1-p)\varphi'(0)t<0$. By
Theorem 8.4.4 of Chung (1974), we have $\mathbb{E}T_1^b<\infty$.
Since $\mathbb{E}|Y_t|<\infty$, $t^{-1}Y_t\to \mathbb{E}Y_1$ w.p.1
as $t\to\infty$ by the Kolmogorov strong law of large numbers.
Taking into account the fact that
$Y_{T_n^b}+\sum_{k=1}^{N_{T_n^b}^R}=-nb$, we get
\begin{align*}
-b&=n^{-1}\left(Y_{T_n^b}+\sum_{k=1}^{N_{T_n^b}^R}\xi_k\right)=n^{-1}T_n^b\left[(T_n^b)^{-1}Y_{T_n^b}+(T_n^b)^{-1}
\sum_{k=1}^{N_{T_n^b}^R}\xi_k\right]\\
&\longrightarrow
\mathbb{E}T_1^b[\mathbb{E}Y_1+(\mathbb{E}N_1^R)(\mathbb{E}\xi_1)]=\mathbb{E}T_1^b[-(1-p)\varphi'(0)]
\ \ \ \mbox{w.p.1 as} \ \ n\to\infty.
\end{align*}
So that $\mathbb{E}T_n^b=\frac{nb}{(1-p)\varphi'(0)}$. Finally, by
conditioning and unconditioning, we obtain \eqref{eq4.9}.
\end{proof}

By the argument of Lemma \ref{lem4.3}, we obtain the following
result.
\begin{theorem}\label{thm4.2}
Let $T=\inf\{t|W_t=0\}$, where $W_0$ is distributed according to the
stationary distribution. Then the mean length of the busy period is
given by
\begin{align*}
\mathbb{E}T=\frac{\mathbb{E}W}{(1-p)\varphi'(0)}.
\end{align*}
\end{theorem}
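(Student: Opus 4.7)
The plan is to reduce Theorem \ref{thm4.2} directly to Lemma \ref{lem4.3} by exploiting the fact that, by the very definition of a busy period, no vacation can be initiated during $(0,T)$. First, I would observe that $W_t>0$ for $t\in(0,T)$, so the counter $N_t^V$ stays constant on $[0,T]$ and no $\eta$-jump is triggered. Equation (3.1) then specialises on this interval to
$$W_t \;=\; W_0 + Y_t + \sum_{k=1}^{N_t^R}\xi_k,\qquad 0\leq t\leq T,$$
and hence
$$T \;=\; \inf\left\{t\geq 0 \;:\; Y_t + \sum_{k=1}^{N_t^R}\xi_k \;=\; -W_0\right\}.$$

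Second, I would condition on the stationary initial level $W_0=w\geq 0$. Since $W_0\in\mathcal{F}_0$ is independent of the driving L\'evy process $\{Y_t\}_{t>0}$ and of the future repair marks $\{(\tau_k,\xi_k)\}_{k\geq 1}$, the conditional law of $T$ given $W_0=w$ is exactly the law of the hitting time $T_1^w$ appearing inside the proof of Lemma \ref{lem4.3} (the instance $n=1$ with the deterministic constant $B_1\equiv w$). That proof, via Kolmogorov's strong law applied to the spectrally positive L\'evy process $Y_t+\sum_{k=1}^{N_t^R}\xi_k$ with drift $-(1-p)\varphi'(0)$, delivers
$$\mathbb{E}[T\mid W_0=w] \;=\; \frac{w}{(1-p)\varphi'(0)}.$$

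Third, since $W_0\stackrel{d}{=}W$ and $\mathbb{E}W<\infty$ by Corollary \ref{cor4.1}, integrating the conditional identity against the stationary law of $W$ yields
$$\mathbb{E}T \;=\; \frac{\mathbb{E}W}{(1-p)\varphi'(0)}.$$

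The substantive content is borrowed wholesale from Lemma \ref{lem4.3}; the only real obstacle is cosmetic, namely justifying the conditioning step. The independence of $W_0\in\mathcal{F}_0$ from the post-zero driving input is built into the framework of Section \ref{sec3}, and the boundary case $w=0$ is harmless since then $T=0$ and the conditional formula trivialises. This is precisely why the statement of Theorem \ref{thm4.2} is prefaced by the phrase \emph{``by the argument of Lemma \ref{lem4.3}''}.
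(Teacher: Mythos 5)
Your proposal is correct and is essentially the paper's own argument: the paper gives no separate proof of Theorem \ref{thm4.2} but simply invokes ``the argument of Lemma \ref{lem4.3}'', which is exactly your reduction of the busy period to the first-passage time of $Y_t+\sum_{k=1}^{N_t^R}\xi_k$ to level $-W_0$ (noting no vacation jump occurs before $T$), followed by the same conditioning-and-unconditioning step on the initial level that closes the proof of that lemma.
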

\begin{remark}
Note that for the $M/G/1$ queue with server breakdowns and multiple
vacations, if $B_i$ is distributed as a service time, then $T_n$ is
the $n$-order busy period which is defined as the time period which
starts when a vacation is completed and there are $n$ customers in
the system and ends at the next departure epoch when the system is
empty.
\end{remark}

\section{The transient distribution}\label{sec5}
In this section, we focuses on analyzing the transient distribution
of $W_t$ in terms of Laplace-Stieltjes transform, for some $t>0$,
conditional on $W_0=x$.
\begin{theorem}\label{thm5.1}
Let $T$ be exponentially distributed with mean $1/\gamma$,
independently of $Y$, $\tau_n$ and $\sigma_n$, $n\geq1$. For
$\theta>0$ and $x\geq0$,
\begin{align}\label{eq5.1}
\mathbb{E}_xe^{-\theta
W_T}=&\frac{\gamma}{\varphi(\theta)-\gamma}e^{-\varphi^{-1}(\gamma)x}\notag\\
&\times \left\{p\frac{\mathbb{E}e^{-\theta W^-}-\mathbb{E}e^{-\theta
W^+}}{\mathbb{E}e^{-\varphi^{-1}(\gamma)
W^-}-\mathbb{E}e^{-\varphi^{-1}(\gamma)
W^+}}+(1-p)\frac{1-\mathbb{E}e^{-\theta\eta_1}}{1-\mathbb{E}e^{-\varphi^{-1}(\gamma)\eta_1}}
-e^{-[\theta-\varphi^{-1}(\gamma)]x}\right\}.
\end{align}
\end{theorem}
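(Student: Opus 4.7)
The plan is to apply the Kella-Whitt martingale of Lemma \ref{lem4.1} and use optional stopping at the independent exponential time $T$. Since $T$ has finite mean $1/\gamma$, the martingale identity $\mathbb{E}_x M_T = 0$ holds; moreover, for $T$ exponentially distributed with rate $\gamma$ and independent of the driving process, Fubini combined with $\mathbb{P}(T > s) = e^{-\gamma s}$ gives the resolvent identity
$$\gamma\,\mathbb{E}_x\int_0^T e^{-\theta W_s}\,ds = \mathbb{E}_x e^{-\theta W_T}.$$
Substituting this into the expanded form of $\mathbb{E}_x M_T = 0$ coming from \eqref{eq4.1} collapses the integral term and yields the basic equation
$$\frac{\varphi(\theta)-\gamma}{\gamma}\,\mathbb{E}_x e^{-\theta W_T} + e^{-\theta x} = A(\theta) + B(\theta),$$
where $A(\theta):=\mathbb{E}_x\sum_{k=1}^{N_T^R}[e^{-\theta(W_{\tau_k}-\xi_k)} - e^{-\theta W_{\tau_k}}]$ and $B(\theta):=\mathbb{E}_x\sum_{k=1}^{N_T^V}(1-e^{-\theta\eta_k})$.

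Next I would evaluate $A(\theta)$ and $B(\theta)$ by applying the Wald-type identity of Lemma \ref{lem4.2} together with the stationarity and ergodicity of $\{W_{\tau_k}\}$, $\{W_{\tau_k}-\xi_k\}$, and the i.i.d.\ property of $\{\eta_k\}$ assumed in Theorem \ref{thm4.1}, obtaining
$$A(\theta) = \mathbb{E}_x N_T^R\bigl(\mathbb{E}e^{-\theta W^-} - \mathbb{E}e^{-\theta W^+}\bigr), \qquad B(\theta) = \mathbb{E}_x N_T^V\bigl(1 - \mathbb{E}e^{-\theta\eta_1}\bigr).$$
The special value $\theta = \varphi^{-1}(\gamma)$ kills the coefficient of $\mathbb{E}_x e^{-\theta W_T}$ on the left-hand side, producing the single scalar balance
$$e^{-\varphi^{-1}(\gamma)x} = \mathbb{E}_x N_T^R\bigl(\mathbb{E}e^{-\varphi^{-1}(\gamma)W^-} - \mathbb{E}e^{-\varphi^{-1}(\gamma)W^+}\bigr) + \mathbb{E}_x N_T^V\bigl(1-\mathbb{E}e^{-\varphi^{-1}(\gamma)\eta_1}\bigr).$$

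The main obstacle is splitting this single identity into separate expressions for $\mathbb{E}_x N_T^R$ and $\mathbb{E}_x N_T^V$. I would invoke the drift-decomposition already uncovered in Theorem \ref{thm4.1} --- where $\mathbb{E}N_1^R\mathbb{E}\xi_1 = -p\mathbb{E}Y_1$ and $\mathbb{E}N_1^V\mathbb{E}\eta_1 = -(1-p)\mathbb{E}Y_1$ apportion the compensating drift between the two jump classes --- and argue that the same $p\!:\!(1-p)$ split persists at the transient level, i.e.
$$\mathbb{E}_x N_T^R\bigl(\mathbb{E}e^{-\varphi^{-1}(\gamma)W^-} - \mathbb{E}e^{-\varphi^{-1}(\gamma)W^+}\bigr) = p\,e^{-\varphi^{-1}(\gamma)x},$$
$$\mathbb{E}_x N_T^V\bigl(1-\mathbb{E}e^{-\varphi^{-1}(\gamma)\eta_1}\bigr) = (1-p)\,e^{-\varphi^{-1}(\gamma)x}.$$
Justifying this split is the delicate step; the cleanest route seems to be to rerun Lemma \ref{lem4.1} on two auxiliary sub-systems --- one retaining only the breakdown jumps and one retaining only the vacation jumps --- and combine the two resulting Laplace identities with the $p/(1-p)$ weighting inherited from the definitions of $\{\tau_n\}$ and $\{\sigma_n\}$. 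Once this split is in hand, substituting the resulting formulas for $A(\theta)$ and $B(\theta)$ back into the basic equation and solving for $\mathbb{E}_x e^{-\theta W_T}$ produces \eqref{eq5.1}, with the stray $-e^{-\theta x}$ reappearing as the advertised $-e^{-[\theta-\varphi^{-1}(\gamma)]x}$ term inside the braces after factoring out $e^{-\varphi^{-1}(\gamma)x}$.
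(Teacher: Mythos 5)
Your proposal follows essentially the same route as the paper: the Kella--Whitt martingale of Lemma \ref{lem4.1}, optional stopping at the independent exponential time $T$, the resolvent identity $\gamma\,\mathbb{E}_x\int_0^T e^{-\theta W_s}\,ds=\mathbb{E}_x e^{-\theta W_T}$, Wald's identity for the two jump sums, and cancellation of the root $\theta=\varphi^{-1}(\gamma)$ of the denominator against the numerator. The ``delicate step'' you flag --- apportioning the single root equation between $\mathbb{E}_x N_T^R$ and $\mathbb{E}_x N_T^V$ with weights $p$ and $1-p$ --- is exactly the point the paper passes over in silence (it invokes only the one root condition for the two unknowns), so your explicit acknowledgment of it is, if anything, more careful than the published argument.
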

\begin{proof}
By Lemma \ref{lem4.1}, we have
\begin{align*}
0&=\mathbb{E}M_T=\varphi(\theta)\int_0^\infty\int_0^t \gamma
e^{-\gamma t} e^{-\theta W_s}dsdt+e^{-\theta
x}-\mathbb{E}_xe^{-\theta W_T}\\
&\ \ \
-\mathbb{E}\sum_{k=1}^{N_T^R}\left[e^{-\theta(W_{\tau_k}-\xi_k)}-e^{-\theta
W_{\tau_k}}\right]-\mathbb{E}\sum_{k=1}^{N_T^V}(1-e^{-\theta\eta_k})\\
&=\frac{\varphi(\theta)}{\gamma}\mathbb{E}_xe^{-\theta
W_T}+e^{-\theta x}-\mathbb{E}_xe^{-\theta W_T}
-\mathbb{E}N_T^R\left[\mathbb{E}e^{-\theta W^-}-\mathbb{E}e^{-\theta
W^+}\right]\\
& \ \ \ -\mathbb{E}N_T^V(1-\mathbb{E}e^{-\theta\eta_1}).
\end{align*}
So that
\begin{align*}
\mathbb{E}_xe^{-\theta W_T}=\frac{\gamma}{\varphi(\theta)-\gamma}
\left\{ \mathbb{E}N_T^R\left[\mathbb{E}e^{-\theta
W^-}-\mathbb{E}e^{-W^+}\right]
+\mathbb{E}N_T^V(1-\mathbb{E}e^{-\theta\eta_1})-e^{-\theta x}
\right\}.
\end{align*}
Note that when $\mathbb{E}Y_1<0$, $\varphi(\theta)$ is increasing on
$[0,\infty)$. Therefore, the inverse of $\varphi(\theta)$ is well
defined on $[0,\infty)$. Hence, the equation
$\varphi(\theta)=\gamma$ has exactly one root
$\theta=\varphi^{-1}(\gamma)$ on $[0,\infty)$. Finally, using the
fact that the root of the denominator should be a root of the
numerator as well, we can obtain \eqref{eq5.1}.
\end{proof}

Next, we derive explicitly the Laplace transform corresponding to
the correlation of the virtual waiting time process:
\begin{align*}
c(t)=\frac{\mathbb{C}ov(W_0, W_t)}{\sqrt{\mathbb{V}ar
W_0\cdot\mathbb{V}ar
W_t}}=\frac{\mathbb{E}(W_0W_t)-(\mathbb{E}W_0)^2}{\mathbb{V}ar
W_0}=\frac{\mathbb{E}(W_0W_t)-\omega^2}{v}.
\end{align*}
Here, we assume the system is in steady-state at time 0.
\begin{theorem}\label{thm5.2}
For $\theta>0$ and $\omega$, $v$ as in Corollary \ref{cor4.1},
\begin{align*}
\int_0^\infty c(t)e^{-\theta
t}dt=&\frac{1}{\theta}-\frac{\omega\varphi'(0)}{v\theta^2}+
\frac{\mathbb{E}(We^{-\varphi^{-1}(\theta)W})}{v\theta}\notag\\
&\times
\left\{\frac{p\mathbb{E}\xi_1}{\mathbb{E}e^{-\varphi^{-1}(\theta)
W^-}-\mathbb{E}e^{-\varphi^{-1}(\theta)
W^+}}+\frac{(1-p)\mathbb{E}\eta_1}{1-\mathbb{E}e^{-\varphi^{-1}(\theta)\eta_1}}\right\},
\end{align*}
where
\begin{align*}
\mathbb{E}(We^{-\varphi^{-1}(\theta)W})=&\left[\frac{\varphi'(0)}{\varphi(\theta)}
-\frac{\theta\varphi'(0)\varphi'(\theta)}{\varphi^2(\theta)}\right]\left[p\frac{\mathbb{E}e^{-\theta
W^-}-\mathbb{E}e^{-\theta
W^+}}{\theta\mathbb{E}\xi_1}+(1-p)\frac{1-\mathbb{E}e^{-\theta\eta_1}}{\theta\mathbb{E}\eta_1}\right]\notag\\
&+\frac{\varphi'(0)}{\varphi(\theta)}\frac{p}{\mathbb{E}\xi_1}\left[\mathbb{E}(W^+e^{-\theta
W^+})-\mathbb{E}(W^-e^{-\theta W^-})-\frac{\mathbb{E}e^{-\theta
W^-}-\mathbb{E}e^{-\theta W^+}}{\theta}\right]\notag\\
&+\frac{\varphi'(0)}{\varphi(\theta)}\frac{1-p}{\mathbb{E}\eta_1}\left[\mathbb{E}(\eta_1e^{-\theta
\eta_1})-\frac{1-\mathbb{E}e^{-\theta \eta_1}}{\theta}\right].
\end{align*}
\end{theorem}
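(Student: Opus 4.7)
The plan is to reduce the Laplace transform of the correlation to a computation under an exponential sampling time, so that Theorem \ref{thm5.1} applies directly. Let $T$ be exponentially distributed with rate $\theta$ and independent of everything else. The identity $\mathbb{E}f(T)=\theta\int_0^\infty f(t)e^{-\theta t}\,dt$ applied to $f(t)=\mathbb{E}(W_0W_t)$ yields $\int_0^\infty \mathbb{E}(W_0W_t)e^{-\theta t}\,dt=\mathbb{E}(W_0W_T)/\theta$, which combined with $\int_0^\infty \omega^2e^{-\theta t}\,dt=\omega^2/\theta$ gives
$$\int_0^\infty c(t)e^{-\theta t}\,dt=\frac{\mathbb{E}(W_0W_T)-\omega^2}{v\theta}.$$
Hence it suffices to evaluate $\mathbb{E}(W_0W_T)$ under the stationary initial law $W_0\sim W$.

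Second, I would condition on $W_0=x$ and apply Theorem \ref{thm5.1}. Rewriting \eqref{eq5.1} as
$$\mathbb{E}_xe^{-sW_T}=\frac{\theta}{\varphi(s)-\theta}\bigl\{A(s)\,e^{-\varphi^{-1}(\theta)x}-e^{-sx}\bigr\},$$
with
$$A(s)=\frac{p(\mathbb{E}e^{-sW^-}-\mathbb{E}e^{-sW^+})}{\mathbb{E}e^{-\varphi^{-1}(\theta)W^-}-\mathbb{E}e^{-\varphi^{-1}(\theta)W^+}}+\frac{(1-p)(1-\mathbb{E}e^{-s\eta_1})}{1-\mathbb{E}e^{-\varphi^{-1}(\theta)\eta_1}},$$
multiplying by $W_0$ and taking expectations in the stationary regime, I obtain
$$\mathbb{E}[W_0e^{-sW_T}]=\frac{\theta A(s)}{\varphi(s)-\theta}\,\mathbb{E}[We^{-\varphi^{-1}(\theta)W}]-\frac{\theta}{\varphi(s)-\theta}\,\mathbb{E}[We^{-sW}].$$
Since $\mathbb{E}(W_0W_T)=-\partial_s\mathbb{E}[W_0e^{-sW_T}]\big|_{s=0}$ and $A(0)=\varphi(0)=0$, $\mathbb{E}W=\omega$, $\mathbb{E}W^2=v+\omega^2$, a routine product-rule expansion at $s=0$ delivers
$$\mathbb{E}(W_0W_T)=A'(0)\,\mathbb{E}[We^{-\varphi^{-1}(\theta)W}]+v+\omega^2-\frac{\omega\varphi'(0)}{\theta}.$$
Subtracting $\omega^2$ and dividing by $v\theta$ reproduces the three summands in the displayed Laplace transform.

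Third, to bring $A'(0)$ into the compact form stated in the theorem I would use the stationary identity $\mathbb{E}W^+-\mathbb{E}W^-=\mathbb{E}\xi_1$. This follows from the joint-convergence hypothesis $(W_{\tau_k}-\xi_k,W_{\tau_k})\overset{d}{\to}(W^-,W^+)$: subtracting the two marginals and passing to the mean yields $\mathbb{E}W^+-\mathbb{E}W^-=\mathbb{E}\xi_1$. Term-by-term differentiation of $A(s)$ then gives $A'(0)=\frac{p\mathbb{E}\xi_1}{\mathbb{E}e^{-\varphi^{-1}(\theta)W^-}-\mathbb{E}e^{-\varphi^{-1}(\theta)W^+}}+\frac{(1-p)\mathbb{E}\eta_1}{1-\mathbb{E}e^{-\varphi^{-1}(\theta)\eta_1}}$, exactly the bracket in the statement.

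Finally, the auxiliary formula for $\mathbb{E}[We^{-\varphi^{-1}(\theta)W}]$ is obtained by differentiating the stationary LST \eqref{eq4.4}. Writing $\mathbb{E}e^{-sW}=\frac{s\varphi'(0)}{\varphi(s)}B(s)$, with $B(s)$ the bracketed factor in \eqref{eq4.4}, the product rule produces $-\mathbb{E}[We^{-sW}]=\Bigl[\frac{\varphi'(0)}{\varphi(s)}-\frac{s\varphi'(0)\varphi'(s)}{\varphi^2(s)}\Bigr]B(s)+\frac{s\varphi'(0)}{\varphi(s)}B'(s)$, and each fraction inside $B(s)$ is differentiated via the quotient rule, producing the $\mathbb{E}(W^\pm e^{-sW^\pm})$ and $\mathbb{E}(\eta_1 e^{-s\eta_1})$ contributions; evaluating at $s=\varphi^{-1}(\theta)$ yields the stated expression. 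The computations are elementary throughout; the main obstacle is careful bookkeeping in the $s\to 0$ product-rule step and the recognition of $A'(0)$ via $\mathbb{E}W^+-\mathbb{E}W^-=\mathbb{E}\xi_1$ to match the form in the theorem.
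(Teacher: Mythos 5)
Your proposal is correct and follows essentially the same route as the paper: both exponentially sample at rate $\theta$, extract $\mathbb{E}_x W_T$ (equivalently $\mathbb{E}[W_0W_T]$) by differentiating the transform of Theorem \ref{thm5.1} at $s=0$, identify the bracketed factor as $A'(0)$ via $\mathbb{E}W^+-\mathbb{E}W^-=\mathbb{E}\xi_1$, and obtain the auxiliary expression by differentiating \eqref{eq4.4}; the only difference is the harmless reordering of integrating against the law of $W_0$ before versus after the $s$-differentiation. (Incidentally, your explicit minus sign in $-\mathbb{E}[We^{-sW}]=\cdots$ is the correct one.)
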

\begin{proof}
Let $T$ be exponentially distributed with mean $1/\theta$. From
\eqref{eq5.1}, we get
\begin{align}\label{eq5.4}
\int_0^\infty \theta e^{-\theta
t}\mathbb{E}_xW_tdt=&-\frac{\varphi'(0)}{\theta}+
e^{-\varphi^{-1}(\theta)x}
\left[\frac{p\mathbb{E}\xi_1}{\mathbb{E}e^{-\varphi^{-1}(\theta)
W^-}-\mathbb{E}e^{-\varphi^{-1}(\theta)
W^+}}+\frac{(1-p)\mathbb{E}\eta_1}{1-\mathbb{E}e^{-\varphi^{-1}(\theta)\eta_1}}\right]+x.
\end{align}
Straightforward calculus yields
\begin{align}\label{eq5.5}
\int_0^\infty c(t)e^{-\theta t}dt&=\frac{1}{v}\int_0^\infty
[\mathbb{E}(W_0W_t)-\omega^2]e^{-\theta t}dt\notag\\
&=\frac{1}{v}\int_0^\infty\int_0^\infty x\mathbb{E}_xW_te^{-\theta
t}d\mathbb{P}(W_0\leq x)dt-\frac{\omega^2}{v\theta}.
\end{align}
Substituting \eqref{eq5.4} into \eqref{eq5.5}, we obtain
\begin{align*}
&\int_0^\infty c(t)e^{-\theta t}dt=-\frac{\omega^2}{v\theta}+\notag\\
&\int_0^\infty \frac{x}{v\theta}\left\{-\frac{\varphi'(0)}{\theta}+
e^{-\varphi^{-1}(\theta)x}
\left[\frac{p\mathbb{E}\xi_1}{\mathbb{E}e^{-\varphi^{-1}(\theta)
W^-}-\mathbb{E}e^{-\varphi^{-1}(\theta)
W^+}}+\frac{(1-p)\mathbb{E}\eta_1}{1-\mathbb{E}e^{-\varphi^{-1}(\theta)\eta_1}}\right]+x\right\}d\mathbb{P}(W_0\leq
x)\notag\\
&=\frac{1}{\theta}-\frac{\omega\varphi'(0)}{v\theta^2}+
\frac{\mathbb{E}(We^{-\varphi^{-1}(\theta)W})}{v\theta}
\left\{\frac{p\mathbb{E}\xi_1}{\mathbb{E}e^{-\varphi^{-1}(\theta)
W^-}-\mathbb{E}e^{-\varphi^{-1}(\theta)
W^+}}+\frac{(1-p)\mathbb{E}\eta_1}{1-\mathbb{E}e^{-\varphi^{-1}(\theta)\eta_1}}\right\},
\end{align*}
where $\mathbb{E}(We^{-\varphi^{-1}(\theta)W})$ can be obtained by
differentiating \eqref{eq4.4}. This completes the proof.
\end{proof}

\section{Stochastic decompositions}\label{sec6}
In this section, we identify some stochastic decomposition
properties of the virtual waiting time for our fluid queueing
system. Stochastic decomposition properties have been studied in
many vacation models. The classical stochastic decomposition
property shows that the steady-state system size at an arbitrary
point can be represented as the sum of two independent random
variable, one of which is the system size of the corresponding
standard queueing system without server vacations and the other
random variable depends on the meaning of vacations in specific
cases (see Fuhrmann and Cooper \cite{Fuhrmann1985} and Doshi
\cite{Doshi1990}). In addition, stochastic decomposition properties
have also been held for L\'{e}vy-driven queues with interruptions as
well as c\`{a}dl\`{a}g processes with certain secondary jump inputs
(see Kella and Whitt \cite{Kella1991} and Ivanovs and Kella
\cite{Ivanovs2013}). It's worth mentioning that, recently, Boxma and
Kella \cite{Boxma2014} generalize known workload decomposition
results for L\'{e}vy queues with secondary jump inputs and queues
with server vacations or service interruptions. In particular, in
the context of our fluid queueing system, we have the following
decomposition results.
\begin{lemma}\label{lem6.1}
There are two random variables $U$ and $V$ such that
\begin{align}\label{eq6.1}
\frac{\mathbb{E}e^{-\theta W^-}-\mathbb{E}e^{-\theta
W^+}}{\theta\mathbb{E}\xi_1}=\mathbb{E}e^{-\theta U},
\end{align}
\begin{align}\label{eq6.2}
\frac{1-\mathbb{E}e^{-\theta\eta_1}}{\theta\mathbb{E}\eta_1}=\mathbb{E}e^{-\theta
V}.
\end{align}
\end{lemma}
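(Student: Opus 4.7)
The plan is to construct the random variables $U$ and $V$ explicitly and recognize the right-hand sides of \eqref{eq6.1} and \eqref{eq6.2} as Laplace-Stieltjes transforms of equilibrium-type distributions built from $\xi_1$, $\eta_1$, $W^-$ and $W^+$.

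I would first dispatch \eqref{eq6.2}, which is the classical statement that the stationary excess (equilibrium) distribution of the positive random variable $\eta_1$ has density $f_V(x)=\mathbb{P}(\eta_1>x)/\mathbb{E}\eta_1$ on $[0,\infty)$. Letting $V$ be distributed according to $f_V$, the normalization is $\int_0^\infty\mathbb{P}(\eta_1>x)dx=\mathbb{E}\eta_1<\infty$, and the LST follows by Fubini from $\int_0^\infty e^{-\theta x}\mathbb{P}(\eta_1>x)dx=\mathbb{E}\int_0^{\eta_1}e^{-\theta x}dx=(1-\mathbb{E}e^{-\theta\eta_1})/\theta$, which upon dividing by $\mathbb{E}\eta_1$ is exactly \eqref{eq6.2}.

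For \eqref{eq6.1} I would extend the same idea using the elementary identity $e^{-\theta a}-e^{-\theta b}=\theta\int_a^b e^{-\theta x}dx$ valid for $a\le b$, applied with $a=W^-$ and $b=W^+$. Note that $W^-\le W^+$ almost surely, since these are the weak limits of the pair $W_{\tau_k}-\xi_k\le W_{\tau_k}$ (the breakdown jumps $\xi_k$ being nonnegative). Taking expectations and switching the order of integration by Fubini gives
\begin{align*}
\mathbb{E}e^{-\theta W^-}-\mathbb{E}e^{-\theta W^+}=\theta\int_0^\infty e^{-\theta x}\,\mathbb{P}(W^-\le x<W^+)\,dx,
\end{align*}
so I would define $U$ to have density $f_U(x)=\mathbb{P}(W^-\le x<W^+)/\mathbb{E}\xi_1$ on $[0,\infty)$, which immediately delivers \eqref{eq6.1}.

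The substantive step — and the only one that uses a hypothesis from Theorem \ref{thm4.1} — is verifying $\int_0^\infty f_U(x)dx=1$, i.e.\ $\mathbb{E}(W^+-W^-)=\mathbb{E}\xi_1$. This is where the joint convergence $(W_{\tau_k}-\xi_k,W_{\tau_k})\overset{d}{\to}(W^-,W^+)$ enters: the continuous mapping theorem applied to the subtraction map yields $\xi_k\overset{d}{\to}W^+-W^-$, and since $\xi_k\overset{d}{=}\xi_1$ for every $k$ we obtain $W^+-W^-\overset{d}{=}\xi_1$, hence $\mathbb{E}(W^+-W^-)=\mathbb{E}\xi_1$. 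I do not anticipate any genuine obstacle: the whole lemma is an algebraic repackaging of two equilibrium-distribution identities, and once the mass condition is confirmed the existence of $U$ and $V$ is automatic.
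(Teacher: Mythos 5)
Your proof is correct and follows essentially the same route as the paper: both recognize the left-hand sides of \eqref{eq6.1} and \eqref{eq6.2} as Laplace transforms of the densities $\mathbb{P}(W^->x\geq W^+)$-type and $\mathbb{P}(\eta_1>x)/\mathbb{E}\eta_1$ (your $f_U(x)=\mathbb{P}(W^-\le x<W^+)/\mathbb{E}\xi_1$ coincides with the paper's $(\mathbb{P}(W^+>x)-\mathbb{P}(W^->x))/\mathbb{E}\xi_1$ once $W^-\le W^+$ a.s.\ is noted). The only difference is that you explicitly verify the normalization $\mathbb{E}(W^+-W^-)=\mathbb{E}\xi_1$ via the continuous mapping theorem, a detail the paper simply asserts by calling $f$ a ``bona fide probability density function.''
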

\begin{proof}
The left hand side of \eqref{eq6.1} is the Laplace transformation of
the function
\begin{align*}
f(x)=\frac{\mathbb{P}(W^+>x)-\mathbb{P}(W^->x)}{\mathbb{E}\xi_1},
\end{align*}
which is a bona fide probability density function.

The left hand side of \eqref{eq6.2} is the Laplace transformation of
the function
\begin{align*}
g(x)=\frac{\mathbb{P}(\eta_1>x)}{\mathbb{E}\eta_1},
\end{align*}
which is a bona fide probability density function (stationary
residual life density of $\eta_1$).
\end{proof}

Applying Lemma \ref{lem6.1}, we can get the following theorem to
characterize the stochastic decomposition property.
\begin{theorem}\label{thm6.1}
Under the conditions of Theorem \ref{thm4.1}, the distribution of
$W$ is the convolution of two distributions one of which is the
distribution of $R$ in Section 2 and the other is the mixture of the
distribution of $U$ with probability $p$ and the stationary residual
life distribution of $\eta_1$ with probability $1-p$.
\end{theorem}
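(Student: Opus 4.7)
The plan is to read off the decomposition directly from the product structure of the Laplace-Stieltjes transform in \eqref{eq4.4}, using Proposition 2.3 to identify the first factor and Lemma \ref{lem6.1} to identify the second.

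First I would start from \eqref{eq4.4} and rewrite it as
\begin{align*}
\mathbb{E}e^{-\theta W}
=\frac{\theta\varphi'(0)}{\varphi(\theta)}\cdot
\left[p\,\frac{\mathbb{E}e^{-\theta W^-}-\mathbb{E}e^{-\theta W^+}}{\theta\mathbb{E}\xi_1}
+(1-p)\,\frac{1-\mathbb{E}e^{-\theta\eta_1}}{\theta\mathbb{E}\eta_1}\right],
\end{align*}
and observe that the first factor is exactly the Laplace-Stieltjes transform of $R$ supplied by Proposition 2.3 (the generalized Pollaczek-Khinchine formula). Thus the first factor is $\mathbb{E}e^{-\theta R}$.

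Next I would apply Lemma \ref{lem6.1} to each of the two terms inside the bracket: the first equals $\mathbb{E}e^{-\theta U}$ and the second equals $\mathbb{E}e^{-\theta V}$, where $V$ has the stationary residual life density $g(x)=\mathbb{P}(\eta_1>x)/\mathbb{E}\eta_1$. The bracket therefore equals $p\,\mathbb{E}e^{-\theta U}+(1-p)\,\mathbb{E}e^{-\theta V}$, which is the Laplace-Stieltjes transform of the mixture that selects $U$ with probability $p$ and $V$ with probability $1-p$. Let $K$ denote a random variable with this mixture distribution, chosen independent of $R$. Then \eqref{eq4.4} becomes $\mathbb{E}e^{-\theta W}=\mathbb{E}e^{-\theta R}\cdot\mathbb{E}e^{-\theta K}=\mathbb{E}e^{-\theta(R+K)}$, and by uniqueness of Laplace-Stieltjes transforms on $[0,\infty)$ we conclude $W\stackrel{d}{=}R+K$, which is the asserted decomposition.

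There is no genuine obstacle beyond bookkeeping; the only delicate point is to make sure the two pieces used in the identification can legitimately be interpreted as laws of non-negative random variables, which is precisely what Lemma \ref{lem6.1} guarantees (the density $f$ for $U$ is non-negative because $W^+$ stochastically dominates $W^-$, since $W^+=W^-+\xi\geq W^-$, and integrates to one by the definition of $\mathbb{E}\xi_1$ via $\int_0^\infty\mathbb{P}(\xi_1>x)\,dx$; the density $g$ for $V$ is the familiar stationary residual life density). Once both factors are identified as Laplace-Stieltjes transforms of proper distributions on $\mathbb{R}_+$, the conclusion that $W$ is their convolution is immediate.
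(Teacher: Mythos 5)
Your proposal is correct and follows essentially the same route as the paper, which derives Theorem \ref{thm6.1} by reading the product structure of \eqref{eq4.4} with the first factor identified as $\mathbb{E}e^{-\theta R}$ via the generalized Pollaczek--Khinchine formula (Proposition 2.3) and the bracketed mixture identified via Lemma \ref{lem6.1}. Your added remarks on why $f$ is a bona fide density (stochastic dominance of $W^+$ over $W^-$ and normalization via $\mathbb{E}W^+-\mathbb{E}W^-=\mathbb{E}\xi_1$) merely make explicit what the paper leaves implicit.
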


Furthermore, under some conditions, we have another stochastic
decomposition result.
\begin{lemma}\label{lem6.2}
Under the conditions of Theorem \ref{thm4.1}, if positive random
variables $\xi_n$, $n\geq1$ are independent of $W_0$, $Y_{\tau_n}$
and $\sum_{i=1}^{N_{\tau_n}^V}\eta_i$, then we have
\begin{align}\label{eq6.3}
\mathbb{E}e^{-\theta
W}=\frac{\theta\varphi'(0)}{\varphi(\theta}\left[p\frac{1-\mathbb{E}e^{-\theta\xi_1}}{\theta\mathbb{E}\xi_1}
\mathbb{E}e^{-\theta W^-}
+(1-p)\frac{1-\mathbb{E}e^{-\theta\eta_1}}{\theta\mathbb{E}\eta_1}\right].
\end{align}
\end{lemma}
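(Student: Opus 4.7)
The plan is to derive (6.3) as a direct consequence of (4.4) in Theorem \ref{thm4.1} by showing that, under the new independence hypothesis, the two random variables $W^-$ and $W^+$ differ by an independent copy of $\xi_1$. Comparing (6.3) with (4.4) term by term, the second bracketed summand is identical, so what needs to be established is the single identity
\begin{align*}
\mathbb{E}e^{-\theta W^+} \;=\; \mathbb{E}e^{-\theta \xi_1}\cdot\mathbb{E}e^{-\theta W^-},
\end{align*}
after which a one-line algebraic manipulation $\mathbb{E}e^{-\theta W^-}-\mathbb{E}e^{-\theta W^+}=(1-\mathbb{E}e^{-\theta\xi_1})\mathbb{E}e^{-\theta W^-}$ converts the first bracketed term of (4.4) into that of (6.3).

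The first step is a pre-limit pathwise identity. From (3.1), evaluated at $t=\tau_n$ and using $N_{\tau_n}^R=n$, one obtains
\begin{align*}
W_{\tau_n}-\xi_n \;=\; W_0+Y_{\tau_n}+\sum_{i=1}^{n-1}\xi_i+\sum_{i=1}^{N_{\tau_n}^V}\eta_i.
\end{align*}
The right-hand side is a function of $W_0$, $Y_{\tau_n}$, $\sum_{i=1}^{N_{\tau_n}^V}\eta_i$ and $\xi_1,\ldots,\xi_{n-1}$; by the assumption that $\xi_n$ is independent of the first three items and by the i.i.d. property of $\{\xi_n\}$, the random variable $\xi_n$ is independent of the sum above. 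Hence $\xi_n \perp (W_{\tau_n}-\xi_n)$ for every $n\geq 1$.

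The second step is to pass to the limit. Theorem \ref{thm4.1} provides the joint convergence $(W_{\tau_n}-\xi_n,W_{\tau_n})\overset{d}{\to}(W^-,W^+)$, and since $W_{\tau_n}=(W_{\tau_n}-\xi_n)+\xi_n$ with independent summands and $\xi_n\sim\xi_1$, the limit joint law must be that of $(W^-,\,W^-+\xi)$ where $\xi$ is independent of $W^-$ and distributed as $\xi_1$. Consequently $W^+\overset{d}{=} W^-+\xi$, which yields the factorization of Laplace transforms claimed above. Substituting this into (4.4) produces (6.3) immediately.

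The only delicate point is justifying that the independence $\xi_n\perp(W_{\tau_n}-\xi_n)$ at fixed $n$ transfers to the independence $\xi\perp W^-$ in the distributional limit; this is the main place where the stationarity and ergodicity of $\{W_{\tau_k}\}$ and $\{W_{\tau_k}-\xi_k\}$ assumed in Theorem \ref{thm4.1}, together with the i.i.d.\ assumption on $\{\xi_n\}$, are used. Everything else is an algebraic rearrangement of (4.4).
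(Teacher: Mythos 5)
Your proposal is correct and follows essentially the same route as the paper: both arguments use the pathwise identity $W_{\tau_n}-\xi_n=W_0+Y_{\tau_n}+\sum_{i=1}^{n-1}\xi_i+\sum_{i=1}^{N_{\tau_n}^V}\eta_i$ to deduce that $\xi_n$ is independent of $W_{\tau_n}-\xi_n$, factor the Laplace transform so that $\mathbb{E}e^{-\theta W^-}-\mathbb{E}e^{-\theta W^+}=(1-\mathbb{E}e^{-\theta\xi_1})\mathbb{E}e^{-\theta W^-}$, and substitute into \eqref{eq4.4}. The only cosmetic difference is that the paper performs the factorization at the pre-limit stage (for $W_{\tau_1}$, invoking stationarity to identify it with $(W^-,W^+)$), whereas you first pass to the limit and then factor; the mathematical content is identical.
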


\begin{proof}
Since $\xi_k$ is independent of
$W_{\tau_k}-\xi_k=W_0+Y_{\tau_n}+\sum_{i=1}^{k-1}\xi_i+\sum_{i=1}^{N_{\tau_k}^V}\eta_i$,
\begin{align}\label{eq6.4}
\frac{\mathbb{E}e^{-\theta (W_{\tau_1}-\xi_1)}-\mathbb{E}e^{-\theta
W_{\tau_1}}}{\theta\mathbb{E}\xi_1}&=\frac{\mathbb{E}e^{-\theta
(W_{\tau_1}-\xi_1)}(1-e^{-\theta\xi_1})}{\theta\mathbb{E}\xi_1}\notag\\
&=\mathbb{E}e^{-\theta
(W_{\tau_1}-\xi_1)}\frac{1-\mathbb{E}e^{-\theta\xi_1}}{\theta\mathbb{E}\xi_1}\notag\\
&=\mathbb{E}e^{-\theta
W^-}\frac{1-\mathbb{E}e^{-\theta\xi_1}}{\theta\mathbb{E}\xi_1}.
\end{align}

Substituting \eqref{eq6.4} into \eqref{eq4.4} yields \eqref{eq6.3}.

\end{proof}

From Lemma \ref{lem6.2}, we obtain the following theorem which
provides another stochastic decomposition.
\begin{theorem}
If the assumptions of Lemma 6.2 are satisfied, then the distribution
of $U$ is the convolution of the distribution of $W^-$ and the
stationary residual life distribution of $\xi_1$.
\end{theorem}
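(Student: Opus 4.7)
The plan is to combine the representations of $\mathbb{E}e^{-\theta U}$ given by Lemmas \ref{lem6.1} and \ref{lem6.2}, then recognize the resulting expression as a product of two Laplace--Stieltjes transforms, which corresponds to the law of a sum of two independent random variables.

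First, I would rewrite the defining identity \eqref{eq6.1} of Lemma \ref{lem6.1} as
$$\mathbb{E}e^{-\theta U}=\frac{\mathbb{E}e^{-\theta W^-}-\mathbb{E}e^{-\theta W^+}}{\theta \mathbb{E}\xi_1}.$$
Next, under the independence assumption of Lemma \ref{lem6.2}, the factorization leading to \eqref{eq6.4} applies at every $\tau_k$, so passing to the limit through the assumed convergence $(W_{\tau_k}-\xi_k, W_{\tau_k})\overset{d}{\to}(W^-,W^+)$, together with the i.i.d.\ property of $\{\xi_n\}$, yields
$$\mathbb{E}e^{-\theta W^+}=\mathbb{E}e^{-\theta W^-}\,\mathbb{E}e^{-\theta \xi_1}.$$
Substituting this into the preceding display immediately gives
$$\mathbb{E}e^{-\theta U}=\mathbb{E}e^{-\theta W^-}\cdot \frac{1-\mathbb{E}e^{-\theta \xi_1}}{\theta\mathbb{E}\xi_1}.$$

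The next step is to identify the second factor. By exactly the argument used to obtain \eqref{eq6.2} in Lemma \ref{lem6.1}, but with $\xi_1$ in place of $\eta_1$, the function $(1-\mathbb{E}e^{-\theta\xi_1})/(\theta\mathbb{E}\xi_1)$ is the Laplace--Stieltjes transform of the probability density $\mathbb{P}(\xi_1>x)/\mathbb{E}\xi_1$, i.e.\ the stationary residual life distribution of $\xi_1$. Thus $\mathbb{E}e^{-\theta U}$ equals a product of two bona fide Laplace--Stieltjes transforms, and by uniqueness the random variable $U$ is distributed as the sum of two independent random variables: one with the law of $W^-$ and the other with the stationary residual life law of $\xi_1$. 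This is precisely the asserted convolution.

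There is essentially no substantive obstacle here; the theorem is a short algebraic consequence of Lemmas \ref{lem6.1} and \ref{lem6.2}. The only minor technical point worth flagging is the limit passage from \eqref{eq6.4} to the identity $\mathbb{E}e^{-\theta W^+}=\mathbb{E}e^{-\theta W^-}\mathbb{E}e^{-\theta \xi_1}$: since $e^{-\theta(\cdot)}$ is bounded and continuous for $\theta>0$, weak convergence preserves both sides, and the independence of $\xi_k$ from $W_{\tau_k}-\xi_k$ (together with the common distribution of the $\xi_k$) ensures that the product structure carries over to the limit.
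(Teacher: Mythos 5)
Your proposal is correct and follows essentially the same route the paper intends: it combines the definition of $U$ via its Laplace--Stieltjes transform in Lemma \ref{lem6.1} with the factorization $\mathbb{E}e^{-\theta W^+}=\mathbb{E}e^{-\theta W^-}\,\mathbb{E}e^{-\theta\xi_1}$ established in \eqref{eq6.4} of Lemma \ref{lem6.2}, and then recognizes $(1-\mathbb{E}e^{-\theta\xi_1})/(\theta\mathbb{E}\xi_1)$ as the transform of the stationary residual life distribution of $\xi_1$. The paper leaves this theorem without an explicit proof, stating only that it follows from Lemma \ref{lem6.2}, and your argument is precisely the intended one.
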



\bibliographystyle{apt}
\bibliography{levy}

\end{document}